\theoremstyle{plain}
\newtheorem{theorem}{Theorem}[section]
\newtheorem{lemma}[theorem]{Lemma}
\theoremstyle{definition}
\newtheorem{definition}[theorem]{Definition}
\newtheorem{example}[theorem]{Example}
\newtheorem{question}[theorem]{Question}
\theoremstyle{remark}
\newtheorem{remark}[theorem]{Remark}
\newcommand{\Cc}{\mathbb{C}}
\newcommand{\Ee}{\mathbb{E}}
\newcommand{\Nn}{\mathbb{N}}
\newcommand{\Pp}{\mathbb{P}}
\newcommand{\Rr}{\mathbb{R}}
\newcommand{\Uu}{\mathbb{U}}
\newcommand{\Yy}{\mathbb{Y}}
\newcommand{\Zz}{\mathbb{Z}}
\newcommand{\Un}{\mathds{1}}
\newcommand{\Be}{\mathcal{B}}
\newcommand{\Ce}{\mathcal{C}}
\newcommand{\He}{\mathcal{H}}
\newcommand{\Je}{\mathcal{J}}
\newcommand{\Le}{\mathcal{L}}
\newcommand{\Pe}{\mathcal{P}}
\newcommand{\Ze}{\mathcal{Z}}
\newcommand{\Ns}{\mathscr{N}}
\newcommand{\Us}{\mathscr{U}}
\newcommand{\Mg}{\mathfrak{M}}
\newcommand{\Rg}{\mathfrak{R}}
\newcommand{\Sg}{\mathfrak{S}}
\newcommand{\eg}{\mathfrak{e}}
\newcommand{\chib}{{\boldsymbol{\chi}}}
\newcommand{\ab}{{\boldsymbol{a}}}
\newcommand{\bb}{{\boldsymbol{b}}}
\newcommand{\xb}{{\boldsymbol{x}}}
\newcommand{\yb}{{\boldsymbol{y}}}
\newcommand{\zb}{{\boldsymbol{z}}}
\newcommand{\ensemble}[1]{ \left\lbrace #1 \right\rbrace } 
\newcommand{\prth}[1]{\!\left( #1 \right) }
\newcommand{\crochet}[1]{\!\left[ #1 \right] }  
\newcommand{\intcrochet}[1]{\llbracket #1 \rrbracket} 
\newcommand{\abs}[1]{\left| #1 \right|}  
\newcommand{\norm}[1]{\left| \! \left| #1 \right| \! \right|}
\newcommand{\Esp}[1]{ \Ee \prth{ #1 } }  
\newcommand{\Prob}[1]{ \Pp \prth{ #1 } }
\newcommand{\Espr}[2]{ \Ee_{#1}\!\prth{ #2 } }  
\newcommand{\Proba}[2]{ \Pp_{#1}\prth{ #2 } }
\def\divise{  {\circ\!\vert} }
\def\multiplede{  {\vert\!\circ} } 
\def\inv{^{-1}}
\def\longlongrightarrow{\hspace{+0.1ex} - \hspace{-1.1ex} - \hspace{-1.1ex} - \hspace{-1.1ex}\longrightarrow  } 
\newcommand{\tendvers}[2]{ \underset{#1 \rightarrow #2}{\longlongrightarrow} }  
\newcommand{\cvlaw}[2]{ \stackrel{\Le}{\underset{#1 \, \rightarrow \, #2}{\longlongrightarrow}} }
\newcommand{\bracket}[1]{\langle #1 \rangle}
\newcommand{\Unens}[1]{ \Un_{ \!\!\; \ensemble{#1} } }
\def\eqlaw{\stackrel{\Le}{=}}
\def\Pl{\mathcal{P}\ell}
\def\Ch{\operatorname{Ch}}
\def\ct{\operatorname{ct}}
\def\Mod{\operatorname{mod}}
\def\Li{\operatorname{Li}}
\def\BJ{\Be\Je}
\def\geq{\geqslant}
\def\leq{\leqslant}
\def\Re{\Rg \eg}
\let\oldforall\forall
\def\forall{\oldforall\,} 
\let\oldexists\exists
\def\exists{\oldexists\,}
\newcommand{\emailhref}[1]{ \email{\href{mailto:#1}{#1}} }
\begin{document}


\title[Random Dirichlet characters]{ Random characters under the $L$-measure, I : \\ Dirichlet characters} 


\author{Yacine Barhoumi-Andréani}
\address{Department of Statistics, University of Warwick, Coventry CV4 7AL, U.K.}
\emailhref{y.barhoumi-andreani@warwick.ac.uk}


\subjclass[2010]{60B15, 11K99, 11N64,  60F05}
\date{\today}


\begin{abstract}
We define the $L$-measure on the set of Dirichlet characters as an analogue of the Plancherel measure, once considered as a measure on the irreducible characters of the symmetric group. 

We compare the two measures and study the limit in distribution of characters evaluations when the size of the underlying group grows. These evaluations are proven to converge in law to imaginary exponentials of a Cauchy distribution in the same way as the rescaled windings of the complex Brownian motion. This contrasts with the case of the symmetric group where the renormalised characters converge in law to Gaussians after rescaling (Kerov Central Limit Theorem).
\end{abstract}

\maketitle

\vspace{-0.4cm}

\tableofcontents

\section{Introduction}

\subsection{Asymptotic representation theory of $ (\Zz/q\Zz)^\times $ }
For $ q \in \Nn^* $, set
\begin{align}\label{Def:Z/qZstar}
G_q := (\Zz/q\Zz)^\times
\end{align}

The goal of this article is to study the properties of characters of $ G_q $ when $q \to +\infty $ and when the characters are selected at random according to the \textit{$L$-measure} that we introduce in definition \ref{Definition:L-measure}. 

This article focuses on the case of \textit{Dirichlet characters} modulo $q$ that are multiplicative group morphisms $ \chi : G_q \to \Cc^\times $ extended to $ \Zz/q\Zz $ by setting $ \chi(n) = 0 $ if $ n \in (\Zz/q\Zz) \setminus (\Zz/q\Zz)^\times $ and finally extended by periodicity to $ \Zz $ by setting $ \chi(n) := \chi(n \!\!\mod q ) $ (see e.g. \cite[§ 5]{IwanecKowalski}). These maps were used by Dirichlet to prove his celebrated theorem on the infinitude of primes in arithmetic progressions.

Dirichlet characters have the following properties :
\begin{enumerate}
\item $ \chi $ is periodic modulo $q$ : $ \chi(n + q) = \chi(n) $ for all $ m, n \in \Nn $, 
\item $ \chi $ is completely multiplicative : $ \chi(mn) = \chi(m) \chi(n) $ for all $ m, n \in \Nn $, 
\item $ \chi(n) \neq 0 $ if and only if $ \gcd(n, q) = 1 $.
\end{enumerate}


Note that these properties imply that $ \chi(1) = 1 $, as $ \chi(1) = \chi(1\times 1) = \chi(1)^2 $ and $ \chi(1) \neq 0 $ since $ \gcd(1, q) = 1 $.

We define $ \widehat{G}_q $ to be the set of Dirichlet characters modulo $ q $. Apart from the value $0$, these characters take values in the $ \varphi(q) $-roots of unity $ \ensemble{ e^{2i \pi k/\varphi(q) }, k \geq 0 } $, where $ \varphi $ is Euler's totient function. There are exactly $ \varphi(q) $ such characters.

The $ L $-function attached to a Dirichlet character $\chi$ is the following function defined for all $ s \in \ensemble{ \Re > 1 } $ by (see e.g. \cite{Tenenbaum})
\begin{align}\label{Def:L-Function}
L_s(\chi) := \sum_{n \geq 1 } \frac{\chi(n) }{ n^s }
\end{align}

We consider these functions as linear forms in the character $ \chi $, hence the choice of notation compared with the usual one that writes $ L(s, \chi) $.

\begin{definition}\label{Definition:L-measure} 

We define the \textit{$L$-measure} on $ \widehat{G}_q $ by
\begin{eqnarray}\label{Def:DirichletMeasure}
\Proba{\! s, q}{ \chi } := \frac{ \abs{L_s(\chi)}^2 }{ Z_{s, q} }, \quad\quad  Z_{s, q} := \sum_{ \eta \in \widehat{G}_q } \abs{L_s(\eta)}^2, \qquad s \in \ensemble{ \Re > 1 }
\end{eqnarray}
%
%
%
%
\end{definition}

This measure can be written in the following way using the infamous Euler formula~\eqref{Eq:EulerProdDirichletSum}~:
\begin{eqnarray}\label{Def:DirichletMeasureHamiltonian}
\Proba{\!s, q}{ \chi } := \frac{ 1 }{ Z_{s, q} } \exp\prth{ - 2 \He_s(\chi) }, \quad\quad  \He_s(\chi) := \sum_{p \in \Pe } \log\abs{1 - p^{-s } \chi(p) }
\end{eqnarray}
where $ \Pe $ is the set of prime numbers. 

We can thus interpret \eqref{Def:DirichletMeasure} in the setting of statistical mechanics as a system of particles on a circle (whose positions are given by the angles of the character evaluated in prime numbers) submitted to a particular logarithmic confinement potential at inverse temperature~$2$.

Let $ \chib \equiv \chib^{(q)} $ denote the canonical evaluation on $ \widehat{G}_q $ in the Dynkin formalism, namely
\begin{align}\label{Def:CanonicalEvaluation}
\chib_k (\eta) = \eta(k), \quad\quad\quad \forall \eta \in \widehat{G}_q
\end{align}

We will be interested in the behaviour of the random variables $ \chib_k $ for a fixed integer $k$. 
The main theorem of this paper states


\begin{theorem}[Convergence in law of the evaluations]\label{Theorem:ConvergenceAngles}
Let $ k \in \intcrochet{2, q} $ be a fixed integer and $ s \in (1, +\infty) $. Then, under $ \Pp_{s, q} $, the following convergence in law is satisfied
\begin{align*}
\chib_k \cvlaw{q}{ +\infty } e^{ i s \log(k) \, \Ce }
\end{align*}
where $ \Ce $ is a standard Cauchy-distributed random variable of density $ x \mapsto \frac{1}{\pi} \frac{1}{1 + x^2 }  $.
\end{theorem}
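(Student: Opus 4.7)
The plan is to establish the convergence through the complex moments (Fourier coefficients) $\Espr{s,q}{\chib_k^m}$ for $m \in \Zz$. Since $\chib_k$ is $\Ss^1$-valued whenever $(k, q) = 1$, a condition that is eventually satisfied along any sequence $q$ coprime to $k$, and the limit $e^{i s \log(k) \Ce}$ also lives on $\Ss^1$, the compactness of the circle together with the density of trigonometric polynomials in $C(\Ss^1)$ turns convergence of all Fourier coefficients into weak convergence of laws. The first step is thus to compute these moments in closed form: I would expand
\[
    \abs{L_s(\chi)}^2 = \sum_{n, \ell \geq 1} \frac{\chi(n)\, \overline{\chi(\ell)}}{(n\ell)^s}
\]
and use the identity $\chib_k^m \, \chi(n) \overline{\chi(\ell)} = \chi(k^m n \ell^{-1})$ (the argument being read in $G_q$), then apply the orthogonality relation $\sum_{\chi \in \widehat{G}_q} \chi(a) = \varphi(q)\, \Un_{a \equiv 1 \!\bmod q}$ valid for $(a, q) = 1$, to obtain
\[
    \Espr{s,q}{\chib_k^m} = \frac{N_m(q)}{N_0(q)} , \qquad N_m(q) := \sum_{\substack{n,\ell \geq 1 \\ (n\ell, q) = 1 \\ \ell \equiv k^m n \!\bmod q}} \frac{1}{(n\ell)^s}.
\]

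The second step is to identify the dominant contribution to $N_m(q)$. For $m > 0$, the diagonal pair $\ell = k^m n$ always satisfies the congruence and contributes $k^{-ms}\, \zeta_q(2s)$, where $\zeta_q(2s) := \sum_{(n, q) = 1} n^{-2s}$; the case $m < 0$ is symmetric via the diagonal $n = k^{\abs{m}} \ell$, giving $k^{-\abs{m}s}\, \zeta_q(2s)$. The remaining off-diagonal pairs are parametrised by $\ell = k^m n + q j$ with $j \neq 0$; splitting into $j \geq 1$ and $j \leq -1$ and comparing the inner sums to integrals (using $s > 1$) bounds their joint contribution by $O_{m, s}(q^{-1})$. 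Since $\zeta_q(2s) \geq 1$, dividing yields $\Espr{s,q}{\chib_k^m} = k^{-\abs{m} s} + O_{m, s}(q^{-1}) \to k^{-\abs{m} s}$ as $q \to +\infty$.

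The final step is to match this with the characteristic function of the standard Cauchy, $\Esp{e^{i t \Ce}} = e^{-\abs{t}}$, which gives
\[
    \Esp{\prth{e^{i s \log(k) \Ce}}^m} = e^{-\abs{m} s \log k} = k^{-\abs{m} s} ,
\]
matching the limiting moments and closing the argument. The main obstacle is the control of the off-diagonal tail in $N_m(q)$, specifically the $j \leq -1$ part where $\ell$ can be of order $1$ while $n$ is forced to be at least of order $q / k^m$: here I would swap the order of summation, parametrise by $\ell$ and the residue class of $n$ modulo $q$, and invoke the standard tail bound $\sum_{n \geq M,\, n \equiv n_0 \!\bmod q} n^{-s} = O\!\prth{(q\, M^{s-1})^{-1}}$ to close the estimate. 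A secondary but minor point is the $q$-dependence of $\zeta_q(2s) = \zeta(2s) \prod_{p \mid q}(1 - p^{-2s})$; this factor appears identically at leading order in $N_m(q)$ and $N_0(q)$ and cancels, so no hypothesis on the arithmetic of $q$ is needed beyond $(k, q) = 1$.
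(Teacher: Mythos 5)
Your argument is correct and rests on the same core idea as the paper's proof — the method of moments for $\Ss^1$-valued random variables, i.e.\ convergence of all Fourier coefficients $\Espr{s,q}{\chib_k^m}$ for $m\in\Zz$ — but the implementation differs. The paper first packages the moments as a ratio of scalar products
$\bracket{D_k^m f_s, f_s}_{L^2(\mu_q)} / \bracket{f_s,f_s}_{L^2(\mu_q)}$
with $f_s = \Li_s\circ e$ and $\mu_q$ the uniform measure on $q$-th roots of unity (Lemma~\ref{Lemma:StructureL-evaluations}), and then establishes the limit by the soft observation that these scalar products are Riemann sums of bounded continuous functions on $[0,1]$, passing to $\mu_\infty = \mathrm{Leb}$. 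You instead compute the moments directly from orthogonality of characters, split the double Dirichlet sum $N_m(q)$ into the diagonal $\ell = k^m n$ and off-diagonal parts, and control the latter by a tail estimate for arithmetic progressions. Your route is in fact closer in spirit to the paper's Lemma~\ref{Lemma:SpeedOfConvergence} (the Wasserstein rate), which performs exactly this diagonal/off-diagonal decomposition; note that the bound it yields, and what your own tail estimate $\sum_{n\geq M,\, n\equiv n_0\,(\mathrm{mod}\,q)} n^{-s} = O\bigl((q M^{s-1})^{-1}\bigr)$ actually produces once you take $M$ of order $q$, is $O(q^{-s})$, stronger than your stated $O(q^{-1})$; both suffice for the limit since $s>1$. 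Two further remarks in your favour: you explicitly retain the coprimality condition $(n\ell, q)=1$ coming from the orthogonality relation, which the paper's Lemma~\ref{Lemma:StructureL-evaluations} in fact drops — a harmless oversight because the resulting $\zeta_q(2s) = \zeta(2s)\prod_{p\mid q}(1-p^{-2s})$ factors cancel in the ratio, exactly as you observe — and you flag the implicit hypothesis $\gcd(k,q)=1$ (without which $\chib_k\equiv 0$), which the paper's statement leaves tacit.
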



This theorem is proven in section \ref{SubSec:FluctuationsEvaluations}. An immediate striking comparison with the winding number of the complex Brownian motion can be made~:~write $ Z_t := 1 + W_t + i W'_t = R_t e^{i \Theta_t } $ with $ (W, W') $ two independent real Brownian motions. Here, $ (\Theta_t)_t $ is a continuous determination of the argument of $ Z $ around $0$. Then, one has (see \cite{Spitzer} or \cite[X-4.1]{RevuzYor}, \cite[ch. 7]{MansuyYor})
\begin{align}\label{Eq:WindingNumberCvLaw}
\frac{ \Theta_t }{ \log(\sqrt{t}) }  \cvlaw{t}{ +\infty } \Ce 
\end{align}

Spitzer proved this last convergence using an explicit computation of the Fourier transform of $ \Theta_t $ (see \cite[X-4.1]{RevuzYor} for another proof). We will prove this convergence in the same vein, with a direct computation of the Laplace transform of $ \chib_k $. 

The similarity stops nevertheless here : the windings of $ (Z_t)_t $ around several points $ z_1, \dots, z_k $ of the complex plane do not converge in law after rescaling to independent Cauchy random variables. A precise description of the limiting joint distribution is given in \cite[ch. XIII cor. 3.9]{RevuzYor} and involves a mixing by a random variable that creates a dependency in the limiting angles. In the case of evaluations of random Dirichlet characters, one has the following

\begin{theorem}[Fluctuations of joint evaluations]\label{Theorem:JointEvaluationFluctuations}

Let $ s \in (1, +\infty ) $. Let $ \ell \geq 1 $ and $ (p_j)_{1 \leq j \leq \ell} $ be fixed prime numbers. Then, under $ \Pp_{s, q} $, we have the following convergence in distribution
\begin{align*}
( \chib_{ p_1}, \dots, \chib_{ p_\ell } ) \cvlaw{q}{ +\infty } \prth{ e^{i s \log(p_1) \, \Ce_1} , \dots, e^{i s  \log(p_\ell) \, \Ce_\ell } }
\end{align*}
where $ (\Ce_1, \dots, \Ce_\ell) $ are independent Cauchy-distributed random variables.
\end{theorem}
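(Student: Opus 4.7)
The plan is to establish joint convergence via convergence of all mixed moments. The limit vector lies in the torus $(\Ss^1)^\ell$, and for $q$ coprime to $p_1 p_2 \cdots p_\ell$ the random vector $(\chib_{p_1}, \dots, \chib_{p_\ell})$ is also supported on $(\Ss^1)^\ell$ (each coordinate is a $\varphi(q)$-th root of unity); by Stone--Weierstrass it suffices to show that for every $(n_1, \dots, n_\ell) \in \Zz^\ell$,
\[
\Espr{s, q}{\prod_{j=1}^\ell \chib_{p_j}^{n_j}} \tendvers{q}{+\infty} \Esp{\prod_{j=1}^\ell e^{i s n_j \log(p_j) \Ce_j}} = \prod_{j = 1}^\ell p_j^{-s |n_j|},
\]
where the last equality combines independence of the $\Ce_j$ with the Cauchy characteristic function $\Esp{e^{i t \Ce}} = e^{-|t|}$.

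Using complete multiplicativity together with the identity $\chi(p)^{-1} = \overline{\chi(p)}$ (a consequence of $\chi(p)$ being a root of unity when $\gcd(p, q) = 1$), I would first rewrite the left-hand side as $\Espr{s, q}{\chib_{M_+} \overline{\chib_{M_-}}}$, where
\[
M_+ := \prod_{n_j > 0} p_j^{n_j}, \qquad M_- := \prod_{n_j < 0} p_j^{|n_j|},
\]
and crucially $\gcd(M_+, M_-) = 1$ because the $p_j$ are distinct primes. Expanding $|L_s(\chi)|^2$ as a double Dirichlet series and applying the orthogonality relation
\[
\sum_{\chi \in \widehat{G}_q} \chi(a) \overline{\chi(b)} = \varphi(q) \, \Un_{a \equiv b \pmod{q}} \, \Un_{\gcd(ab, q) = 1}
\]
turns the moment into an explicit ratio of divisor-type sums:
\[
\Espr{s, q}{\chib_{M_+} \overline{\chib_{M_-}}} = \frac{N_{s, q}(M_+, M_-)}{N_{s, q}(1, 1)}, \qquad N_{s, q}(m, n) := \sum_{\substack{a, b \geq 1 ,\ \gcd(ab, q) = 1 \\ am \equiv bn \pmod q}} (ab)^{-s}.
\]

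The asymptotic evaluation proceeds by stratifying the congruence as $a M_+ - b M_- = k q$ for $k \in \Zz$. The term $k = 0$ parametrises via $a = M_- t$, $b = M_+ t$ (using $\gcd(M_+, M_-) = 1$) and contributes $(M_+ M_-)^{-s} \sum_{\gcd(t, q) = 1} t^{-2s}$ to the numerator and $\sum_{\gcd(t, q) = 1} t^{-2s}$ to the denominator; the ratio of these diagonal contributions is exactly $(M_+ M_-)^{-s} = \prod_j p_j^{-s |n_j|}$. For $k \neq 0$, the relation $|a M_+ - b M_-| = |k| q$ forces $\max(a, b) \gtrsim |k| q$, and parametrising the lattice of solutions as $(a, b) = (a_0 + M_- t, b_0 + M_+ t)_{t \geq 0}$ with a crude bound $(ab)^{-s} \leq a_0^{-s} (b_0 + M_+ t)^{-s}$ yields, after summing in $t$ and then in $k$, an aggregated error of order $O(q^{-s})$; the hypothesis $s > 1$ is exactly what makes both sums converge. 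The same argument handles the off-diagonal part of the denominator.

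Combining these two estimates gives $\Espr{s, q}{\chib_{M_+} \overline{\chib_{M_-}}} = (M_+ M_-)^{-s} + O(q^{-s})$, which converges to $\prod_{j=1}^\ell p_j^{-s |n_j|}$ as required. The main obstacle is the uniform control of the $k \neq 0$ tails, a bivariate extension of the one-dimensional estimate underlying Theorem \ref{Theorem:ConvergenceAngles}. The conceptual payoff is that the asymptotic independence of the $(\chib_{p_j})_j$ is not probabilistic in origin but forced by the algebraic coprimality $\gcd(M_+, M_-) = 1$: this very coprimality makes the diagonal term factor as $\prod_j p_j^{-s |n_j|}$, which is precisely the joint Fourier transform of independent Cauchy-exponentials.
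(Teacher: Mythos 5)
Your proposal is correct and is essentially the paper's proof, cast in a more self-contained form: both reduce to mixed moments, collapse the $\ell$ evaluations into a single $\Espr{s,q}{\chib_\kappa\overline{\chib_\rho}}$ via complete multiplicativity, and exploit the coprimality of your $M_+, M_-$ (the paper's $N_1, N_2$) to make the diagonal divisor sum factor as $\prod_j p_j^{-s|n_j|}$. Where the paper passes to the limit $q\to\infty$ by invoking the Riemann-sum convergence already established in the proof of Theorem~\ref{Theorem:ConvergenceAngles}, you instead stratify the congruence $aM_+\equiv bM_-\ (\mathrm{mod}\ q)$ by $aM_+-bM_-=kq$ and bound the off-diagonal contribution directly; this is precisely the mechanism of the paper's Lemma~\ref{Lemma:SpeedOfConvergence}, so you get the $O(q^{-s})$ Wasserstein rate for joint evaluations as a byproduct, which the paper only records for single evaluations.
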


It is enough to consider the evaluations in prime numbers $ (\chib_p)_{p \in \Pe} $ by multiplicativity of characters. More general evaluations will be dependent, provided that the numbers in which they are evaluated are not coprime.

\subsection{Motivations}


Let $ n \in \Nn $. The Plancherel measure on the set $ \Yy_n $ of Young diagrams with $n$ boxes is defined by (see e.g. \cite{Kerov, OlshanskiSurvey})
\begin{align*}
\Pl_n(\lambda) := \frac{ d_\lambda^2 }{n!} 
\end{align*}

Here, $ d_\lambda $ denotes the dimension of the irreducible $ \Sg_n $-module indexed by the Young diagram $ \lambda \in \Yy_n $ and $ \Sg_n $ is the symmetric group ; this quantity has an explicit expression in terms of $ \lambda $, but it can also be written in terms of characters of $ \Sg_n $ as its irreducible characters are in bijection with Young diagrams (see \cite[I-7]{MacDo}). For $ \lambda \in \Yy_n $, let $ \chi^\lambda : \Sg_n \to \Zz $ be the associated irreducible character. A classical formula states (see \cite[ch. I-7 (7.6), ch. I-6 ex. 2 (a)]{MacDo})
\begin{align*}
d_\lambda = \chi^\lambda(id_n)
\end{align*}
where $ id_n \in \Sg_n $ is the identity permutation. This last formula allows to write the Plancherel measure as a measure on characters by setting
\begin{align}\label{Def:PlancherelMeasureOnCharacters}
\Pl_n(\chi) := \frac{ \chi(id_n)^2 }{n!}
\end{align}

Character evaluations form a natural set of observables for this measure. Using the Dynkin formalism, we define the canonical evaluation $ \chib $ on $ \widehat{\Sg}_n $ by
\begin{align*}
\chib_\sigma(\eta) := \eta(\sigma), \quad\quad\quad \forall \eta \in \widehat{\Sg}_n, \quad \forall \sigma \in \Sg_n
\end{align*}

A natural question concerns the behaviour of $ \chib_\sigma $ under $ \Pl_n $ for a certain $ \sigma \in \Sg_n $ and $ n \to +\infty $ ; such types of questions gave birth to the field of \textit{asymptotic representation theory} \cite{Kerov}. In the case where $ \sigma $ is a $k$-cycle $ c_k $ for a fixed integer $k$, one has the fondamental result due to Kerov \cite{IvanovOlshanskiKerov} independently discovered by Hora \cite{HoraKerov} :

\begin{theorem}[Gaussian fluctuations of character evaluations (Kerov CLT)]\label{Theorem:KerovCLT} Let $ k \geq 2 $ be a fixed integer and $ c_k $ a $k$-cycle. Then, under $ \Pl_n $, one has the following convergence in law
\begin{align*}
\frac{1}{n^{k/2}  }   \frac{ \chib_{ c_k } }{ \chib_{ id_n } }     \cvlaw{n}{+\infty } \sqrt{k } G_k \qquad G_k \sim \Ns(0, 1)
\end{align*}

Moreover, different evaluations $ (n^{-k/2} \chib_{ c_k } /\chib_{ id_n })_{2 \leq k \leq r} $ for a fixed integer $r$ converge to independent Gaussian random variables.
\end{theorem}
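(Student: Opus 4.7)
The plan is to apply the method of moments, computing joint moments via the structure of the class algebra $ Z(\Cc[\Sg_n]) $. Writing $ \hat{\chi}^\lambda(\sigma) := \chi^\lambda(\sigma)/d_\lambda $ for the normalised character, and setting $ Y_k^{(n)} := n^{k/2}\, \hat{\chi}^\lambda(c_k) $ (the natural rescaling that produces a non-degenerate limit), it suffices to show that for any fixed $ k_1, \dots, k_r \geq 2 $
\[
\Ee_{\Pl_n}\crochet{ Y_{k_1}^{(n)} \cdots Y_{k_r}^{(n)} } \tendvers{n}{+\infty} \Ee\crochet{ \prod_{i=1}^r \sqrt{k_i}\, G_{k_i} }
\]
with $ (G_k)_{k \geq 2} $ independent standard Gaussians; the theorem will follow by the method of moments since a Gaussian is determined by its moments.

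The key representation-theoretic input is that the class sum $ z_C := \sum_{g \in C} g \in \Cc[\Sg_n] $ acts on the $ \lambda $-isotypic component by the scalar $ \omega^\lambda(C) := |C|\, \hat{\chi}^\lambda(C) $. Expanding the product $ z_{C_1} \cdots z_{C_r} $ in the basis of class sums of $ Z(\Cc[\Sg_n]) $, reading off the coefficient of $ z_{\ensemble{id}} $, and combining with the elementary identity $ \Ee_{\Pl_n}\crochet{ \hat{\chi}^\lambda(g) } = \Un_{\ensemble{g = id}} $ (a rephrasing of the decomposition of the regular representation of $ \Sg_n $) yields the fundamental moment identity
\[
\Ee_{\Pl_n}\crochet{ \prod_{i=1}^r \hat{\chi}^\lambda(C_i) } = \frac{ N^{id}(C_1, \dots, C_r) }{ \prod_{i=1}^r |C_i| },
\]
where $ N^{id}(C_1, \dots, C_r) $ counts tuples $ (g_1, \dots, g_r) \in C_1 \times \cdots \times C_r $ with $ g_1 \cdots g_r = id $.

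The analytic core of the proof is the asymptotic enumeration of $ N^{id} $ when each $ C_i $ is the conjugacy class of cycle type $ (k_i, 1^{n-k_i}) $. Such factorisations parametrise (possibly disconnected) branched covers of the Riemann sphere with prescribed ramification over $ r $ marked points, and the Riemann-Hurwitz formula controls the power of $ n $ attached to each topological type. Combined with $ |C_k| \sim n^k/k $, the leading contribution to the rescaled moment comes solely from disconnected genus-zero configurations in which the $ r $ cycles pair up, each pair consisting of two mutually inverse cycles of equal length. This produces in the limit exactly Wick's formula $ \sum_{\pi} \prod_{ \ensemble{i,j} \in \pi } k_i \, \Un_{\ensemble{k_i = k_j}} $, where $ \pi $ ranges over perfect matchings of $ \intcrochet{1, r} $ (and the sum is empty for odd $ r $), which is the joint moment of the centred independent Gaussians $ (\sqrt{k_i}\, G_{k_i}) $.

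The main obstacle is this combinatorial enumeration: one must rigorously verify that planar configurations contribute exactly the pairing sum and that higher-genus contributions are suppressed by powers of $ n^{-1} $. A conceptually cleaner route is to recast the argument within the Ivanov-Olshanski framework of shifted symmetric functions, in which the observables $ p^\#_k := n^{\downarrow k}\, \hat{\chi}^\lambda(c_k) $ are algebraically independent generators whose Plancherel moments admit a transparent expansion in connected Hurwitz numbers, automating the genus accounting and turning the Gaussianity into a statement about the leading symbol in a natural filtration.
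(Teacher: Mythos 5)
The paper itself does not prove Theorem~\ref{Theorem:KerovCLT}: it states it as a known result and cites \cite{IvanovOlshanskiKerov} and \cite{HoraKerov}, so there is no in-paper argument against which to compare your proposal. What you have written is a correct high-level outline of one standard route to the theorem: reduce joint Plancherel moments of the renormalised characters to factorisation counts $N^{id}(C_1,\dots,C_r)$ via the class-algebra/Frobenius formula, then analyse the asymptotics of those counts. The exact moment identity
\[
\Ee_{\Pl_n}\crochet{ \prod_{i=1}^r \hat{\chi}^\lambda(C_i) } = \frac{ N^{id}(C_1,\dots,C_r) }{ \prod_{i=1}^r |C_i| }
\]
is correct (it follows from $\Ee_{\Pl_n}[\hat{\chi}^\lambda(g)] = \Un_{\ensemble{g = id}}$ and the fact that class sums act by scalars on isotypic components), and Wick's formula is correctly identified as the target combinatorial expression for the limiting moments.

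However, the step you yourself flag as the ``main obstacle''---the rigorous asymptotic enumeration of $N^{id}$ under the rescaling $n^{k_i/2}/|C_{k_i}|$, showing that only disconnected genus-zero pairing configurations survive and every other topological type is $O(n^{-1})$ smaller---is exactly where the theorem's content lives, and your proposal leaves it as a sketch. Invoking ``the Riemann--Hurwitz formula controls the power of $n$'' is a heuristic, not an argument; one needs a concrete bookkeeping device (e.g.\ the weight filtration on the algebra of polynomial functions on Young diagrams where $p^\#_k := n^{\downarrow k}\hat{\chi}^\lambda(c_k)$ has degree $k$, or a direct estimate on structure constants of class sums) to turn it into a proof. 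You correctly identify the Ivanov--Olshanski shifted-symmetric-function framework as the cleaner and rigorous route---this is essentially the argument in the reference the paper cites---but as written your proposal is an outline pointing at that argument rather than a self-contained proof.
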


This result concerns the Gaussian fluctuations of the \textit{renormalised} character, namely $ \chi / \chi(id_n) $. It has been extended in various ways, by computing the speed of convergence in the total variation distance (see \cite{Fulman:SteinPlancherel}), by letting $k$ depend on $n$ in the cycle $ c_k $ (for $  k = O(\sqrt{n}) $, see \cite{Sniady:Gaussian} and references cited), by changing the cycle $c_k$ with a product of cycles, in which case the limiting distribution changes radically (see \cite{HoraKerov, IvanovOlshanskiKerov}) or by changing the measure (see e.g. \cite{FerayMeliot, Kerov-q-Plancherel, BorodinOlshanskiZmeas, AdinPostnikovRoichman, MeliotGelfand, Biane:ApproximateFactorisation} and references cited).

This article is in the lineage of these results ; it aims at giving another direction of generalisation of this last analysis by replacing the group $ \Sg_n $ with the group $ (\Zz/q \Zz)^\times $. The analogue of the Plancherel measure on $ (\Zz/q \Zz)^\times $ will be the uniform measure, which is a limiting case of $ L $-measure that we will study separately in section \ref{SubSec:OtherMeasure}. The case of the $ L $-measure corresponds to the aforementioned other types of measures, particular cases of the \textit{Schur measure} that we describe in the next section. In this framework, theorems \ref{Theorem:ConvergenceAngles} and \ref{Theorem:JointEvaluationFluctuations} are clear analogues of the Kerov CLT, the equivalence betwen disjoint cycles and prime numbers being made in accordance to the usual analogies between permutations and primes (see e.g. \cite{Grandville} or \cite[p. 22]{ArratiaBarbourTavare}). These theorems have nevertheless some disparities~:~random Dirichlet characters need not be renormalised, and the limiting random variables involve the Cauchy distribution (the stable law of index $1$) and not the Gaussian one (the stable law of index $2$).

\subsection{Natural analogues of the Plancherel measure on other groups}

The first point to investigate when trying to generalise theorem \ref{Theorem:KerovCLT} to other groups concerns the natural analogue of the Plancherel measure \eqref{Def:PlancherelMeasureOnCharacters}. Given its form, the Plancherel measure on $ \widehat{G}_q $ is
\begin{align}\label{Def:UniformMeasureDirChar}
\Pp(\chi) := \frac{  \chi(1)^2 }{ \sum_{ \eta \in \widehat{G}_q } \chi(1)^2 } = \frac{1}{\varphi(q) }
\end{align}
as $ \chi(1) = 1 $. This is thus the uniform measure on $ \widehat{G}_q $. This measure corresponds to a limiting type of $L$-measure, where we have set $ s \to +\infty $ in \eqref{Def:DirichletMeasure} (see section \ref{SubSec:OtherMeasure}). For this degenerate type of $ L $-measure, the limiting behaviour of the characters evaluations is given by the following theorem proven in section \ref{SubSec:OtherMeasure} :

\begin{theorem}[Limit in law of the evaluations under $ \Pp_{\infty, q} $]\label{Theorem:LimitEvaluationsUniformMeasure} The following convergence in law is satisfied for all fixed integer $ k \in \intcrochet{2, q} $ 
\begin{align*}
\chib_k   \cvlaw{q}{ + \infty } e^{i 2\pi  U}, \qquad U \sim \Us\!\prth{\, \crochet{0, 1} }
\end{align*}

Moreover, $ (\chib_p)_{p \in \Pe} $ converges in law to a vector of independent random variables. 
\end{theorem}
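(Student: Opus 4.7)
The plan is to compute every joint Fourier coefficient (mixed moment) of the evaluations and invoke the Peter--Weyl / Stone--Weierstrass theorem on the compact torus. The key input is the standard character orthogonality relation on the finite abelian group $\widehat{G}_q$:
$$ \Espr{\infty, q}{ \chi(m) } = \frac{1}{\varphi(q)} \sum_{\chi \in \widehat{G}_q} \chi(m) = \Un_{\ensemble{m \, \equiv \, 1 \!\!\mod q}}, \qquad m \in G_q. $$

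Fix distinct primes $p_1, \dots, p_\ell$ and take $q$ coprime to $p_1 \cdots p_\ell$, so that each evaluation $\chib_{p_i}$ lies on the unit circle. Complete multiplicativity of $\chi$ turns a mixed power into a single character evaluation: for any $(n_1, \dots, n_\ell) \in \Zz^\ell$,
$$ \Espr{\infty, q}{ \prod_{i=1}^\ell \chib_{p_i}^{n_i} } = \Espr{\infty, q}{ \chi\!\prth{ \prod_{i=1}^\ell p_i^{n_i} } } = \Un_{\ensemble{ p_1^{n_1} \cdots p_\ell^{n_\ell} \, \equiv \, 1 \!\!\mod q }}, $$
where the product is interpreted in $G_q$ when some $n_i$ is negative. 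Writing $p_1^{n_1} \cdots p_\ell^{n_\ell} = a/b$ in lowest terms, unique factorization in $\Zz$ combined with the distinctness of the $p_i$ forces $a \neq b$ unless every $n_i = 0$. Hence for $(n_i) \neq 0$ the congruence $a \equiv b \!\!\mod q$ can hold only for the finitely many $q$ dividing $|a-b|$, and the mixed moment vanishes for all $q$ sufficiently large. The limit therefore equals $\prod_i \Un_{\ensemble{n_i = 0}}$, which is exactly the corresponding Fourier coefficient of the product of $\ell$ independent uniform laws on the unit circle.

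Since the distributions of $(\chib_{p_1}, \dots, \chib_{p_\ell})$ are supported on the compact torus $\Tt^\ell$ in the coprime regime, and the characters $(z_1, \dots, z_\ell) \mapsto z_1^{n_1} \cdots z_\ell^{n_\ell}$ form an algebra dense in $C(\Tt^\ell)$ by Stone--Weierstrass, convergence of every Fourier coefficient yields weak convergence in law; this establishes the joint independence statement. The marginal convergence for a general fixed $k \geq 2$ follows from the same argument with $\ell = 1$, giving $\Espr{\infty, q}{ \chib_k^n } = \Un_{\ensemble{k^n \, \equiv \, 1 \!\!\mod q}}$, which vanishes for $n \neq 0$ and $q > k^{\abs{n}} - 1$, hence converges to $\Un_{\ensemble{n = 0}}$. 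There is no substantive obstacle: the argument reduces to character orthogonality on $\widehat{G}_q$ together with the elementary observation that a fixed rational $\neq 1$ is congruent to $1$ modulo only finitely many integers. The only bookkeeping concerns restricting $q$ to be coprime to $k$ (respectively to $p_1 \cdots p_\ell$), necessary because evaluations at an index sharing a factor with $q$ vanish and therefore lie off the limiting unit circle.
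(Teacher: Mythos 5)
Your proof is correct and takes essentially the same approach as the paper: compute mixed moments via the orthogonality relation, observe that $\Unens{\,\cdot \equiv \cdot \!\!\mod q\,}$ stabilizes to $\Unens{\,\cdot = \cdot\,}$ once $q$ exceeds a fixed bound, and identify the resulting Fourier coefficients with those of independent uniforms on the circle. The only cosmetic difference is that the paper works with nonnegative pairs $(m,\ell)$ and conjugates rather than signed exponents, and it leaves implicit (whereas you make explicit) the restriction to $q$ coprime to $k$ (resp.\ to $p_1\cdots p_\ell$) required for the orthogonality relation and for the evaluations to sit on the unit circle.
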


$ $

More generally, the Plancherel measure and a wide class of measures studied in \cite{FerayMeliot, Kerov-q-Plancherel, BorodinOlshanskiZmeas, AdinPostnikovRoichman, MeliotGelfand, Biane:ApproximateFactorisation} are particular specialisations of the following Schur measure restricted to $ \widehat{\Sg}_n $ :
\begin{align}\label{Def:RestrictedSchurMeasure}
\Mg^{(n)}_{X, Y}(\lambda) := \frac{ s_\lambda(X) s_\lambda(Y) }{ h_n\crochet{XY} } \Unens{ \lambda \in \Yy_n}, \qquad h_n\crochet{XY} := \sum_{ \lambda \in \Yy_n } s_\lambda(X) s_\lambda(Y) 
\end{align}

Here, $ s_\lambda(X) $ is the Schur function \cite[ch. 1-3]{MacDo} evaluated in an alphabet $ X $ and the normalisation constant $ h_n\crochet{XY} $ is a specialisation of the complete homogeneous functions \cite[ch. 1-2]{MacDo}. The prominent Schur measure on the set of all Young diagrams introduced by Okounkov in \cite{OkounkovSchurMes} is an independent randomisation\footnote{By a random variable $ B_{X, Y} $ satisfying $ \Prob{ B_{X, Y} = n } =   h_n\crochet{XY}/H\crochet{XY}  $ with $ H\crochet{XY} := \sum_{n \geq 0} h_n\crochet{XY} $~. The Schur measure writes $ \Mg_{X, Y}(\lambda) := \sum_{n \geq 0 }\Prob{ B_{X, Y} = n } \Mg^{(n)}_{X, Y}(\lambda)  = s_\lambda(X) s_\lambda(Y) /  H\crochet{XY} $.} of \eqref{Def:RestrictedSchurMeasure}.

The Schur measure writes as a measure on $ \widehat{\Sg}_n $ using the Frobenius characteristic (see \cite[I-7]{MacDo})
\begin{align*}
s_\lambda(X) = \Ch_X(\chi^\lambda) := \frac{1}{n!} \sum_{\sigma \in \Sg_n } \chi^\lambda(\sigma) \, p_{ \ct(\sigma) }(X)  
\end{align*}
where $ p_{\ct(\sigma) }(X) $ are the \textit{power functions}, a particular type of symmetric functions (see \cite[ch. 1-2]{MacDo}) indexed by the cycle-type of a permutation. Write $ \sigma = \sigma_1 \dots \sigma_{C(\sigma)} $ where $ (\sigma_k)_{1 \leq k \leq C(\sigma)} $ are the disjoint cycles of $ \sigma $ and $ C(\sigma) $ is the total number of cycles. Then, the power functions can be expressed in terms of a family $ (p_\ell(X))_{\ell \geq 1} $ associated with the lenght of a cycle
\begin{align*}
p_{ \ct(\sigma) }(X) = \prod_{k = 1}^{ C(\sigma) } p_{ \abs{ \sigma_k } }(X) = \prod_{\ell \geq 1} p_\ell(X)^{ m_\ell(\sigma) }
\end{align*}
with $ m_\ell(\sigma) := \sum_{k \geq 1 } \Unens{  \abs{ \sigma_k } = \ell } $ and $ \abs{\sigma_k} $ is the size of the cyle $ \sigma_k $ (the number of its elements).

We can thus write 
\begin{align*}
\Ch_X(\chi ) = \frac{1}{n!} \sum_{\sigma \in \Sg_n } \chi (\sigma) \, \prod_{k \geq 1} p_\ell(X)^{ m_\ell(\sigma) }  
\end{align*}
in the same way as 
\begin{align*}
L_s(\chi ) = \sum_{ n \in \Nn^* } \chi (n) \, \prod_{p \in \Pe } p^{- s v_p(n) }  
\end{align*}

The point of importance is to notice that one can write the measure \eqref{Def:RestrictedSchurMeasure} as a measure on $ \widehat{\Sg}_n $ by writing, in the same vein as for the Plancherel measure
\begin{align*}
\Mg^{(n)}_{X, Y}(\chi) = \frac{ \Ch_X(\chi ) \Ch_Y(\chi ) }{ h_n\crochet{XY} }
\end{align*}

The Frobenius characteristic $ \Ch_X $ is a linear form on $ \widehat{\Sg}_n $ and so is the evaluation $ \chib_{id_n} $~; the common points in all the precedent measures is thus (1) their writing as a product of two linear forms and (2) the coefficients of each linear form that write as a product over the considered structures (cycles or primes).

If one replaces the characters of $ \Sg_n $ by characters of another finite group $ G $ whose elements have a natural notion of decomposition into elementary structures (primes, cycles, etc.), a natural analogue of these measures can hence be defined by means of a product of two real linear forms on $ \widehat{G} $ of the form
\begin{align}\label{Def:L-SchurMeasure}
\begin{aligned} 
& \Pp_{L_1, L_2}(\chi) := \frac{ L_1(\chi ) L_2(\chi ) }{ \Ze(\widehat{G}) }, \quad  \Ze(\widehat{G}) := \sum_{\chi \in \widehat{G} } L_1(\chi ) L_2(\chi ) \\
&  L_i(\chi) \equiv L_\zb (\chi) := \sum_{g \in G} \chi(g) \prod_{a \in \Ce(g) } z_a^{ m(a) }, \quad i \in \ensemble{1, 2}, \quad \zb := \prth{ z_a }_a
\end{aligned}
\end{align}
where $ \Ce(g) $ is a particular set characteristic of a certain decomposition of $ g \in G $ and the $ z_a $'s are real numbers.

The measure \eqref{Def:DirichletMeasure} is clearly a particular case of \eqref{Def:L-SchurMeasure}, but it has only a one-dimensional free parameter $s$ ; from this perspective, it is more an analogue of the $z$-measure \cite{BorodinOlshanskiZmeas} or the $q$-Plancherel measure \cite{FerayMeliot, Kerov-q-Plancherel} than the Schur measure itself. We will come back on this last point in section \ref{Section:LMesureGenerale}.

\subsection{Plan}

The plan of this article is the following : after some reminders on characters of both $ \Sg_n $ and $ G_q $, we start by comparing random characters under the aforementioned measures and show that they share a similarity of structure ; their evaluations can be written by operators of dilation in both cases, acting on different spaces of square integrable functions. We then study the asymptotic properties of $ \chib_k $ 
under $ \Pp_{s, q} $ and prove theorems \ref{Theorem:ConvergenceAngles} and \ref{Theorem:JointEvaluationFluctuations}. We moreover give the speed of this convergence in the Wasserstein distance (lemma \ref{Lemma:SpeedOfConvergence}). We treat in section \ref{SubSec:OtherMeasure} the case of the uniform measure \eqref{Def:UniformMeasureDirChar} that corresponds to the limiting case of the $L$-measure $ \Pp_{\! \infty, q} $. Last, we treat in section \ref{Section:LMesureGenerale} the case of the analogue of the Schur measure on $ \widehat{G}_q $. We conclude with several questions of interest, extensions and future developments. 

\vspace{+0.2cm}

This article is the first of a series that investigates $L$-measures on sets of more general characters such as Hecke characters and automorphic representations of $ GL(n) $.

\vspace{+0.22cm}

\subsection{Notations}
We write $ a \divise b $ for ``$a$ divides $b$'' and $ b  \multiplede a $ for ``$b$ multiple of $a$''. We will make a constant use of the multi-index notation $ x^\lambda := \prod_{k \geq 1} x_k^{ \lambda_k } $ if $ x = (x_1, x_2, \dots) $ and $ \lambda = (\lambda_1, \lambda_2, \dots) $. We also set $ \lambda ! := \prod_{k \geq 1} \lambda_k ! $ for the multi-index factorial. Last, we set $ \abs{\lambda} = \sum_{k \geq 1} \lambda_k $ for a multi-index. The raising factorial is defined as $ x^{ \uparrow n } := x(x + 1)\cdots (x + n - 1) $ for $ x \in \Cc $ and $ n \geq 1 $.

We will note $ \lambda \vdash n $ for $ \lambda \in \Yy_n $. The Vandermonde determinant is denoted by $ a_\delta(x) := a_\delta(x_1, \dots, x_n) $ if $ \delta \equiv \delta_n = (n - 1, n - 2, \dots, 1, 0) $ and defined by 
\begin{align*}
a_\delta(x) := \prod_{1 \leq i < j \leq n } (x_i - x_j)
\end{align*}

For a Laurent series $ f $ in $n$ variables, we write $ \crochet{ x^\lambda }(f) $ the coefficient of $ x^\lambda $, i.e. $ \crochet{ x^\lambda }(f) = \int_{ \Uu^n } x^\lambda \overline{ f(x) } dm(x) $ with $ dm(x) := \prod_{k \geq 1} \frac{dx_k}{ 2i \pi  x_k } $ and $ \Uu $ the unit circle if this last integral converges.

The Euler's totient function is denoted by $ \varphi $ and defined for all $ n \geq 1 $ by
\begin{align*}
\varphi(n) := n \prod_{ p \divise n } \prth{1 - \frac{1}{p} }
\end{align*}

Here, the product is on the set of prime numbers $ \Pe $. Every integer $ n \in \Nn^* $ decomposes into a product of primes according to
\begin{align*}
n = \prod_{p \in \Pe } p^{ v_p(n) }
\end{align*}
where $ v_p(n) $ is the $p$-adic valuation of $n$. For a sequence of complex numbers $ (a_p)_{p \in \Pe} $ such that $ \abs{a_p } \leq 1 $, one has the formula, for all $ s \in \ensemble{\Re > 1} $
\begin{align}\label{Eq:EulerProd=SumGeneral}
\prod_{p \in \Pe } \prth{1 - p^{-s} a_p }\inv = \sum_{n \geq 1} \frac{1}{n^s} \prod_{p \in \Pe } a_p^{ v_p(n) }
\end{align}

In particular, for a character $ \chi \in \widehat{G}_q $, using the prime factor decomposition and the morphism property of $ \chi $, we get the well known Euler formula
\begin{align}\label{Eq:EulerProdDirichletSum}
\prod_{p \in \Pe } \prth{1 - p^{-s} \chi(p) }\inv = \sum_{n \geq 1} \frac{1}{n^s} \prod_{p \in \Pe } \chi(p)^{ v_p(n) } = \sum_{n \geq 1} \frac{1}{n^s} \chi(n)
\end{align}

Note that as $ \chi(p) = 0 $ if $ p \divise q $, the product on the LHS of the last formula is in fact on the set of primes that do not divide $q$.

Last, we set $ \Un(n) := \Unens{n \geq 1} $ and we define the multiplicative convolution $*$ of two sequences $ (a_n)_{n \geq 1} $ and $ (b_n)_{n \geq 1} $ by
\begin{align*}
a * b(n) := \sum_{k, \ell \geq 1} a_k b_\ell \Unens{k\ell = n } = \sum_{ \ell \divise n } a_\ell b_{n / \ell} = \sum_{ \ell \divise n } a_{n / \ell} b_\ell
\end{align*}

In particular, we note $ a^{* k}(n) := a*\cdots * a(n) $ ($k$ times). We moreover recall that Dirichlet series have the following morphism property for the multiplicative convolution, for $ z \in \ensemble{\Re > 1} $ and provided that the following series converge 
\begin{align}\label{Eq:ConvolutionMorphism}
\sum_{n \geq 1} \frac{a * b(n) }{ n^z } = \sum_{n \geq 1} \frac{a (n) }{ n^z } \sum_{m \geq 1} \frac{ b(m) }{ m^z }
\end{align}

\section{From $ \widehat{\Sg}_n $ to $ \widehat{G}_q $} 

\subsection{Reminders on the irreducible characters of $ \Sg_n $}

Let $ \chi^\lambda \in \widehat{\Sg}_n $ be the irreducible character associated to $ \lambda \vdash n $. For $ \sigma \in \Sg_n $, the value of $ \chi^\lambda(\sigma) $ only depends on the cycle type $ \mu := \ct(\sigma) \vdash n $. We denote this value by $ \chi^\lambda_\mu $. The canonical evaluation on the probability space $ (\Yy_n, \Pl_n) $ thus becomes $ \chi_\mu : \lambda \mapsto \chi^\lambda_\mu $ . A classical formula for the evaluation of $ \chi^\lambda_\mu $ is given by \cite[ch. I-7 (7.7)]{MacDo} 
\begin{eqnarray}\label{Eq:CharacterWithSchur}
\chi^\lambda_\mu = \bracket{ s_\lambda, p_\mu }
\end{eqnarray}

Here, $ p_\mu $ is the power function, and the scalar product can be realised as the classical $ L^2 $ scalar product for the Haar measure of the unitary group. One has also the following

\begin{lemma}[\cite{MacDo}, (7.8) p. 114] Write $ \crochet{ x^\lambda }(f) $ for the coefficient of $ x^\lambda := \prod_{k \geq 1} x_k^{\lambda_k } $ in the Laurent series $ f $ in $n$ variables, i.e. $ \crochet{ x^\lambda }(f) = \int_{ \Uu^n } x^\lambda \overline{ f(x) } dm(x) $ with $ dm(x) := \prod_{k \geq 1} \frac{dx_k}{ 2i \pi  x_k } $ and $ \Uu $ the unit circle. Then
\begin{align*}
\chi^\lambda_\mu =  \crochet{ x^{\lambda + \delta} }\prth{ p_\mu a_\delta } = \int_{ \Uu^n } x^{\lambda + \delta} \overline{ p_\mu(x)  a_\delta (x) } dm(x)
\end{align*}
\end{lemma}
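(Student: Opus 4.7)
The plan is to deduce this Fourier-coefficient formula from the preceding formula $\chi^\lambda_\mu = \bracket{s_\lambda, p_\mu}$ combined with the Weyl-type identity $a_\delta \, s_\lambda = a_{\lambda+\delta}$, where $a_\alpha(x) := \det(x_i^{\alpha_j})_{1\leq i, j\leq n}$ is the alternant. This is pure algebraic bookkeeping; the analytic part (the integral representation) is a standard orthogonality on the torus.

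First I would recall the expansion of the power sum on the basis of Schur functions: by the symmetry of the character table and the orthogonality relations in $\widehat{\Sg}_n$, one has
\begin{align*}
p_\mu = \sum_{ \nu \vdash n } \chi^\nu_\mu \, s_\nu .
\end{align*}
Multiplying both sides by $a_\delta$ and using $a_\delta \, s_\nu = a_{\nu + \delta}$ (the Jacobi--Trudi / Weyl identity), I obtain
\begin{align*}
p_\mu \, a_\delta = \sum_{ \nu \vdash n } \chi^\nu_\mu \, a_{\nu + \delta} .
\end{align*}

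Next I would extract the coefficient of $x^{\lambda + \delta}$ from this expansion. Writing $a_{\nu + \delta}(x) = \sum_{\sigma \in \Sg_n} \operatorname{sgn}(\sigma) \, x^{\sigma \cdot (\nu + \delta)}$, the monomials occurring in $a_{\nu + \delta}$ are exactly the permutations of the tuple $\nu + \delta$. Since $\nu$ is a partition, $\nu + \delta = (\nu_1 + n - 1, \nu_2 + n - 2, \dots, \nu_n)$ is strictly decreasing; in particular, two such tuples coming from distinct partitions $\nu, \lambda$ are not permutations of one another. Hence $[x^{\lambda + \delta}]\prth{a_{\nu + \delta}} = \delta_{\nu, \lambda}$, which yields
\begin{align*}
\crochet{ x^{\lambda + \delta} } \prth{ p_\mu \, a_\delta } = \chi^\lambda_\mu .
\end{align*}

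Finally, the integral representation is obtained by Fourier-inverting on the torus: writing $x_k = e^{i \theta_k}$, the measure $dm(x) = \prod_k \frac{ dx_k }{ 2i\pi \, x_k }$ becomes the normalised Haar measure $\prod_k \frac{d\theta_k}{2\pi}$, on which the family $\prth{ x^\alpha }_{\alpha \in \Zz^n }$ is orthonormal, so that $\crochet{ x^\alpha }(f) = \int_{\Uu^n} \overline{f(x)} \, x^\alpha \, dm(x)$ for any Laurent polynomial $f$ (note that $p_\mu a_\delta$ has integer coefficients, so complex conjugation does not alter the answer). The main point requiring care is the combinatorial step above, namely that the strict decrease of $\nu + \delta$ eliminates every cross-term in the sum over $\nu$; everything else is formal manipulation of well-known identities from~\cite{MacDo}.
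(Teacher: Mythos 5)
Your proof is correct and follows essentially the same route as the paper's: you expand $p_\mu = \sum_\nu \chi^\nu_\mu s_\nu$, multiply by $a_\delta$ via the bialternant identity $s_\nu a_\delta = a_{\nu+\delta}$, and then use the strict monotonicity of $\nu + \delta$ to isolate the $\nu = \lambda$ term when extracting the coefficient of $x^{\lambda+\delta}$. Your explicit remark that $p_\mu a_\delta$ has integer coefficients, so the complex conjugation in the torus integral is harmless, is a welcome small addition of rigor not spelled out in the paper.
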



For the reader's benefit, we remind the proof of this result.

\begin{proof}

Using equation \eqref{Eq:CharacterWithSchur} and the fact that $ (s_\lambda)_\lambda $ is an orthonormal basis, one has
\begin{align*}
p_\mu = \sum_{\lambda \vdash n} \bracket{ s_\lambda, p_\mu } s_\lambda = \sum_{\lambda \vdash n} \chi^\lambda_\mu  s_\lambda
\end{align*}

One has moreover (see e.g. \cite{MacDo}) $ s_\lambda = a_{\lambda + \delta}/ a_\delta $, hence $ p_\mu a_\delta = \sum_{\lambda \vdash n} \chi^\lambda_\mu  a_{\lambda + \delta} $ and in particular, 
\begin{align*}
\crochet{ x^{\lambda + \delta} }\prth{ p_\mu a_\delta } = \sum_{\nu \vdash n} \chi^\nu_\mu  \crochet{ x^{\lambda + \delta} }\prth{ a_{\nu + \delta} }
\end{align*}

It thus remains to prove that $ \crochet{ x^{\lambda + \delta} }\prth{ a_{\nu + \delta} } = \Unens{ \lambda = \nu } $. But one has
\begin{align*}
\crochet{ x^{\lambda + \delta} }\prth{ a_{\nu + \delta} } & = \int_{ \Uu^n } x^{\lambda + \delta} \overline{ a_{\nu + \delta}(x) } dm(x) \\
			& = \sum_{\sigma \in \Sg_n } \varepsilon(\sigma) \int_{ \Uu^n } x^{\lambda + \delta} \overline{x^{\sigma(\nu + \delta)}} dm(x) \\
			& = \sum_{\sigma \in \Sg_n } \varepsilon(\sigma) \Unens{ \lambda + \delta = \sigma(\nu + \delta) } \\
			& = \Unens{ \lambda = \nu }
\end{align*}
since the only permutation allowing to have $ \lambda + \delta = \sigma(\nu + \delta) $ is the identity, the partitions $ \lambda + \delta $ and $ \nu + \delta $ having strictly increasing parts.
\end{proof}


\subsection{Moments of characters under the Plancherel measure}

\begin{lemma}[Structure of character evaluations] Let $ dm(z) := \prod_{ k \geq 1 } \frac{dz_k}{2 i \pi z_k} $. For $ f, g \in L^2(\Uu, m) \equiv L^2( m) $, define the scalar product
\begin{align*}
\bracket{f, g}_{L^2(\Uu^n) } := \int_{ \Uu^n } f(z) \overline{g(z) } dm(z) = \int_{ \crochet{0, 1}^n } f\prth{ e^{2 i \pi \theta_1 }, \dots, e^{2 i \pi \theta_n } } \overline{ g\prth{ e^{2 i \pi \theta_1 }, \dots, e^{2 i \pi \theta_n } } } \, d\theta_1 \dots d\theta_n
\end{align*}

For all $ \mu \vdash n $ and $ x \in \Uu $, set 
\begin{align*}
f_\mu(x) := p_\mu(x) a_\delta(x) 
\end{align*}

We define the operator of anti-dilatation by
\begin{align*}
\widetilde{\delta_x} g(u) := g( \overline{x_1} u_1, \overline{x_2} u_2, \dots, \overline{x_n} u_n )
\end{align*}
and the following operator  
\begin{align*}
\Le_\mu := \int_{ \Uu^n } \overline{ f_\mu (x) } \widetilde{\delta_x} \, dm(x) : g  \in L^2(m) \mapsto \int_{ \Uu^n }  \overline{ f_\mu (x) } g(\overline{x} \odot \cdot )  \, dm(x)   \in L^2(m)
\end{align*}

Using the fact that $ f_\mu \in L^2(m) $ (as $ f_\mu $ is a polynomial) and the Cauchy-Schwarz inequality, it is easily proven that $ \Le_\mu g \in L^2(m) $ if $  g \in L^2(m) $. 

Last, set
\begin{align*}
R_n(x) & := \sum_{\lambda \vdash n } \overline{x}^\lambda  \\
h_n(x) & := \Le_{1^n} R_n
\end{align*}

Then, we have
\begin{eqnarray}\label{Eq:EvaluationPlancherelWithOperator}
\Ee_{\Pl}^{(n)}\prth{ \chi_\mu^k \overline{\chi_\nu}^\ell  }  =  \frac{ \bracket{ \Le_\mu^k  h_n,  \Le_\nu^\ell h_n  }_{L^2(\Uu^n) }  }{ \bracket{  h_n, h_n  }_{L^2(\Uu^n) } } 
\end{eqnarray}

\end{lemma}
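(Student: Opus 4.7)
The approach is to diagonalise $\Le_\mu$ on the Fourier monomials of $L^2(\Uu^n, dm)$ and recover the formula as a Parseval expansion.

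The first step is to compute the action of $\Le_\mu$ on an arbitrary monomial $\overline{u}^\gamma$. Since $\widetilde{\delta_x}(\overline{u}^\gamma)(u) = \overline{(\overline{x}\odot u)}^\gamma = x^\gamma \overline{u}^\gamma$, one obtains
$$\Le_\mu(\overline{u}^\gamma)(u) \;=\; \overline{u}^\gamma \int_{\Uu^n} \overline{f_\mu(x)}\, x^\gamma\, dm(x) \;=\; \overline{u}^\gamma\, \crochet{x^\gamma}(f_\mu).$$
Specialising to $\gamma = \lambda + \delta$ and invoking the Frobenius-type formula $\chi^\lambda_\mu = \crochet{x^{\lambda+\delta}}(f_\mu)$ recalled above yields the eigenvalue equation
$$\Le_\mu(\overline{u}^{\lambda+\delta}) \;=\; \chi^\lambda_\mu\, \overline{u}^{\lambda+\delta}, \qquad \lambda \vdash n.$$
Moreover, since the vectors $\lambda+\delta$ are strictly decreasing (hence pairwise distinct) as $\lambda$ ranges over partitions of $n$, the family $\{\overline{u}^{\lambda+\delta}\}_{\lambda \vdash n}$ is orthonormal in $L^2(\Uu^n, dm)$.

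The second step is to identify $h_n$. The classical decomposition $p_\mu = \sum_{\lambda \vdash n} \chi^\lambda_\mu s_\lambda$, combined with $s_\lambda = a_{\lambda+\delta}/a_\delta$, gives $f_\mu = p_\mu a_\delta = \sum_\lambda \chi^\lambda_\mu a_{\lambda+\delta}$, so that the Fourier support of $f_{1^n}$ is concentrated on permutations of the strict vectors $\lambda+\delta$ with coefficients $\pm d_\lambda$. Unfolding $h_n = \Le_{1^n} R_n$ and exploiting this alternating structure, one obtains
$$h_n(u) \;=\; \sum_{\lambda \vdash n} d_\lambda\, \overline{u}^{\lambda+\delta},$$
after which the eigenvalue relation gives $\Le_\mu^k h_n = \sum_{\lambda \vdash n} d_\lambda (\chi^\lambda_\mu)^k\, \overline{u}^{\lambda+\delta}$, and similarly $\Le_\nu^\ell h_n = \sum_{\lambda \vdash n} d_\lambda (\chi^\lambda_\nu)^\ell\, \overline{u}^{\lambda+\delta}$.

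The conclusion then follows from Parseval applied to the orthonormal family $\{\overline{u}^{\lambda+\delta}\}_{\lambda \vdash n}$:
$$\bracket{\Le_\mu^k h_n, \Le_\nu^\ell h_n}_{L^2(\Uu^n)} \;=\; \sum_{\lambda \vdash n} d_\lambda^2\, (\chi^\lambda_\mu)^k\, \overline{(\chi^\lambda_\nu)^\ell}, \qquad \bracket{h_n, h_n}_{L^2(\Uu^n)} \;=\; \sum_{\lambda \vdash n} d_\lambda^2 \;=\; n!.$$
The ratio recovers exactly $\sum_\lambda \frac{d_\lambda^2}{n!}(\chi^\lambda_\mu)^k \overline{(\chi^\lambda_\nu)^\ell} = \Ee_\Pl^{(n)}(\chi_\mu^k\, \overline{\chi_\nu}^\ell)$, which is the desired identity. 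The main obstacle lies in the second step, the identification of $h_n$: the summation index in $R_n$ does not explicitly carry the shift by $\delta$, so one must exploit carefully the alternating character of $f_{1^n}$ (inherited from the factor $a_\delta$) to see that $\Le_{1^n}$ effectively translates indices by $\delta$ and reassembles the sum into the Plancherel generating series on the shifted basis.
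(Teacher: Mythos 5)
Your Step 1 is correct and elegant: since $\widetilde{\delta_x}(\overline{u}^\gamma) = x^\gamma\,\overline{u}^\gamma$, each $\Le_\mu$ is a Fourier multiplier, $\Le_\mu(\overline{u}^\gamma) = \crochet{x^\gamma}(f_\mu)\,\overline{u}^\gamma$, and the previous lemma gives $\Le_\mu(\overline{u}^{\lambda+\delta}) = \chi^\lambda_\mu\,\overline{u}^{\lambda+\delta}$, so the shifted monomials $\ensemble{\overline{u}^{\lambda+\delta}}_{\lambda\vdash n}$ form an orthonormal eigenbasis. This is a genuinely more conceptual route than the paper's, which instead expands each scalar product as an iterated contour integral, introduces a reproducing kernel $K_n$, and reassembles the integrals; both proofs ultimately rest on the same key identity $\crochet{x^{\lambda+\delta}}(f_\mu) = \chi^\lambda_\mu$, but your spectral reading makes the structure transparent at a glance.

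The gap is in your Step 2. You correctly notice that the written definition $R_n(x) = \sum_{\lambda\vdash n}\overline{x}^\lambda$ lacks the shift by $\delta$, but your proposed repair --- that ``$\Le_{1^n}$ effectively translates indices by $\delta$'' --- cannot work. By your own Step 1, $\Le_{1^n}$ is a \emph{multiplication} operator on Fourier monomials: it sends $\overline{u}^\gamma$ to $\crochet{x^\gamma}(f_{1^n})\,\overline{u}^\gamma$ without altering the exponent $\gamma$. Applied to the literal $R_n$ it would therefore give $h_n = \sum_{\lambda\vdash n}\crochet{x^\lambda}(f_{1^n})\,\overline{u}^\lambda$; and since $f_{1^n} = p_{1^n}a_\delta$ is homogeneous of degree $n + \binom{n}{2}$ while $\abs{\lambda} = n$, every coefficient $\crochet{x^\lambda}(f_{1^n})$ vanishes for $n\geq 2$, giving $h_n = 0$. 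No amount of ``exploiting the alternating structure'' can make a multiplication operator shift the exponent; the alternating structure of $f_{1^n}$ only constrains \emph{which} Fourier modes of total degree $n + \binom{n}{2}$ carry a nonzero coefficient.

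The actual resolution is that $R_n$ should read $R_n(x) = \sum_{\lambda\vdash n}\overline{x}^{\lambda+\delta}$: this is forced by the paper's own identity $K_n(x,y) = R_n(\overline{x}\odot y)$, where $K_n(x,y) = \sum_{\lambda\vdash n}(x\odot\overline{y})^{\lambda+\delta}$ carries the shift. With that definition, Step 1 immediately yields
\begin{align*}
h_n(u) = \Le_{1^n}R_n(u) = \sum_{\lambda\vdash n}\crochet{x^{\lambda+\delta}}(f_{1^n})\,\overline{u}^{\lambda+\delta} = \sum_{\lambda\vdash n} d_\lambda\,\overline{u}^{\lambda+\delta},
\end{align*}
and your Parseval computation
\begin{align*}
\bracket{\Le_\mu^k h_n,\;\Le_\nu^\ell h_n}_{L^2(\Uu^n)} = \sum_{\lambda\vdash n} d_\lambda^2\,(\chi^\lambda_\mu)^k\,\overline{(\chi^\lambda_\nu)^\ell}, \qquad \bracket{h_n,h_n}_{L^2(\Uu^n)} = n!,
\end{align*}
closes the proof cleanly. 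So: state the corrected $R_n$ explicitly, drop the ``translates indices'' claim, and the argument is complete.
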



\begin{proof}
For $ k \geq 1 $ and $ \mu \vdash n $, one has 
\begin{align*}
\Ee_{\Pl}^{(n)}\prth{ \abs{ \chi_\mu }^{2 k} } & = \frac{1}{n!} \sum_{ \lambda \vdash n } d_\lambda^2 \prth{ \chi_\mu^\lambda }^{2 k} = \frac{1}{n!} \sum_{ \lambda \vdash n } \bracket{s_\lambda, p_{ 1^n  } }^2   \bracket{s_\lambda, p_\mu }^{2 k} \\
		  & = \frac{1}{n!} \sum_{ \lambda \vdash n } \int_{ \Uu^n } x^{\lambda + \delta} \overline{ p_{ 1^n  }(x)  a_\delta (x) } dm(x)  \overline{ \int_{ \Uu^n } y^{\lambda + \delta} \overline{ p_{ 1^n  }(y)  a_\delta (y) } dm(y) } \times \\
		  &    \hspace{+1cm} \prod_{\ell = 2}^{k + 1} \int_{ \Uu^n } x_\ell^{\lambda + \delta} \overline{ p_\mu(x_\ell)  a_\delta (x_\ell) } dm(x_\ell)   \overline{ \int_{ \Uu^n } y_\ell^{\lambda + \delta} \overline{ p_\mu(y_\ell)  a_\delta (y_\ell) } dm(y_\ell) } \\
		  & =  \frac{1}{n!} \int_{ (\Uu^n)^{2(k + 1)} } \prth{ \sum_{ \lambda \vdash n } \prod_{\ell = 1}^{k + 1} x_\ell^{\lambda + \delta} \overline{ y_\ell^{\lambda + \delta} } } \overline{ p_{ 1^n  }(x_1)  a_\delta (x_1) } p_{ 1^n  }(y_1)  a_\delta (y_1)   \times \\
		  &   \hspace{+1cm} \prod_{\ell = 2}^{k + 1} \overline{ p_\mu(x_\ell)  a_\delta (x_\ell) } p_\mu(y_\ell)  a_\delta (y_\ell)  \prod_{\ell = 1}^{k + 1} dm(x_\ell)dm(y_\ell)  
\end{align*}

Denote $ \xb := (x_1, \dots, x_{k + 1}) $ and $ \yb := (y_1, \dots, y_{k + 1}) $. Last, set
\begin{eqnarray}\label{Def:AutoreproducingKernelPermutations} 
K_n( \xb, \yb ) := \sum_{ \lambda \vdash n } \prod_{\ell \geq 1}  x_\ell^{\lambda + \delta} \overline{ \prod_{\ell \geq 1}  y_\ell^{\lambda + \delta} } 
\end{eqnarray}

In this last formula, the variables can have a different size. For instance, one can define
\begin{align*}
K_n( \xb, y ) := \sum_{ \lambda \vdash n } \prth{ \prod_{\ell \geq 1}  x_\ell^{\lambda + \delta} } \overline{  y ^{\lambda + \delta} }
\end{align*}

Then, one has
\begin{align*}
\Ee_{\Pl}^{(n)}\prth{ \abs{ \chi_\mu }^{2 k} } = \frac{1}{n!} \int_{ (\Uu^n)^{2(k + 1)} } K_n(\xb, \yb) \ \overline{ f_{1^n}\!\! \otimes \!\!  f_\mu^{\otimes k}(\xb) } \cdot  f_{1^n} \!\!\otimes\!\! f_\mu^{\otimes k}(\yb)  \ dm^{\otimes k + 1}(\xb) dm^{\otimes k + 1}(\yb)
\end{align*}

Using the formula
\begin{align*}
\int_{ \Uu^n  } z^{\lambda + \delta } \overline{ z^{ \nu + \delta } } \ dm (z) = \Unens{ \lambda = \nu  }
\end{align*}
one can check that $ K_n $ is autoreproducing with respect to $ m $, i.e.
\begin{align*}
\bracket{  K_n( \xb, \cdot ), K_n( \yb, \cdot )  }_{ L^2(m ) } := \int_{ \Uu^n  } K_n( \xb, z ) \overline{ K_n( \yb, z )} \ dm (z) = K_n( \xb, \yb )
\end{align*}

Writing $ dm(\xb) $ for $ dm^{\otimes k + 1}(\xb) $ to simplify the notation, one thus has
\begin{align*}
\Ee_{\Pl}^{(n)}\prth{ \abs{ \chi_\mu }^{2 k} } & = \frac{1}{n!} \int_{ (\Uu^n)^{2 k + 3} }  K_n( \xb, z ) \overline{ K_n( \yb, z )} \ dm(z) \ \overline{ f_{1^n} \!\!\otimes\!\! f_\mu^{\otimes k}(\xb)} \cdot   f_{1^n} \!\!\otimes\!\! f_\mu^{\otimes k}(\yb)  \ dm(\xb) dm(\yb) \\
			& = \frac{1}{n!} \int_{  \Uu^n  } \prth{ \int_{ (\Uu^n)^{ k + 1 } }   K_n( \xb, z ) \ \overline{ f_{1^n} \!\!\otimes\!\! f_\mu^{\otimes k}(\xb) } \ dm(\xb) } \times  \\
			& =   \hspace{+1cm} \overline{ \prth{ \int_{ (\Uu^n)^{ k + 1 } }   K_n( \yb, z ) \ \overline{ f_{1^n} \!\!\otimes\!\! f_\mu^{\otimes k}(\yb) } \ dm(\yb) } } \ dm(z) \\
			& = \frac{1}{n!} \int_{  \Uu^n  } \abs{ \int_{ (\Uu^n)^{ k + 1 } }   K_n( \xb, z ) \ \overline{ f_{1^n} \!\!\otimes\!\! f_\mu^{\otimes k}(\xb) } \ dm(\xb) }^2 dm(z)  
\end{align*}

We define the coordinatewise product of two vectors $ \xb = (x_1, \dots, x_k) $ and $ \yb = (y_1, \dots, y_k) $ by $ \xb \!\odot\! \yb := (y_1 x_1, \dots, y_k x_k) $. One can thus write 
\begin{align*}
K_n( \xb, y ) := \sum_{ \lambda \vdash n } \prth{ \bigodot \vphantom{  }_{_{\ell \geq 1}}  x_\ell}^{\lambda + \delta}  \overline{  y ^{\lambda + \delta} } = \sum_{ \lambda \vdash n } \prth{ \overline{  y } \!\odot\! \bigodot \vphantom{  }_{_{\ell \geq 1}}  x_\ell}^{\lambda + \delta} 
\end{align*}

Using the function $ R_n $, we have $ K_n(x, y) = R_n(\overline{x} \odot y) $. Then, for all $ z \in \Uu^n $,
\begin{align*}
\Le_{1^n} R_n(z) & = \int_{ \Uu^n } \overline{f_{1^n}(x) }  R_n(\overline{x} \odot  z )  \, dm(x) \\
\Le_\mu \Le_{1^n} R_n(z) & = \int_{ (\Uu^n)^2 } \overline{f_{1^n}(x_1) f_\mu(x_2) }  R_n(\overline{x_2} \odot \overline{x_1} \odot  z )  \, dm(x_1) dm(x_2) 
\end{align*}

By induction and using $ m $ in place of $ m^{ \otimes k} $, one has
\begin{align*}
\Le_\mu^k \Le_{1^n} R_n(z)   & =   \int_{ (\Uu^n)^{k + 1} } \overline{f_{1^n}(x_1) } \prod_{\ell = 2}^{k + 1} \overline{ f_\mu(x_\ell) } \,  R_n(\overline{x_1} \odot \dots \overline{x_{k + 1}} \odot  z )  \, dm(x_1, \dots, x_{k + 1}) \\
				& =  \int_{ (\Uu^n)^{k + 1} } \overline{f_{1^n}(x_1) } \prod_{\ell = 2}^{k + 1} \overline{ f_\mu(x_\ell) } \,  K_n( \, (x_1, \dots,  x_{k + 1}),  z )  \, dm(x_1, \dots, x_{k + 1}) \\
				& = \int_{ (\Uu^n)^{ k + 1 } }   K_n( \xb, z ) \ \overline{ f_{1^n} \otimes f_\mu^{\otimes k}(\xb) } \ dm(\xb)
\end{align*}

Finally, one gets 
\begin{align*}
\Ee_{\Pl}^{(n)}\prth{ \abs{ \chi_\mu }^{2 k} } = \frac{1}{n!} \int_{  \Uu^n  } \abs{  \Le_\mu^k \Le_{1^n} R_n(z) }^2 dm(z)  = \frac{1}{n!} \int_{  \Uu^n  } \abs{  \Le_\mu^k h_n(z) }^2 dm(z)
\end{align*}

Setting $ k = 0 $ in the last formula shows that $ n! = \sum_{\lambda \vdash n } d_\lambda^2 = \int_{  \Uu^n  } \abs{   h_n(z) }^2 dm(z)  $, hence
\begin{eqnarray}\label{Eq:FormuleEspCaracPlancherel}
\Ee_{\Pl}^{(n)}\prth{ \abs{ \chi_\mu }^{2 k} } = \frac{  \int_{  \Uu^n  } \abs{  \Le_\mu^k h_n(z) }^2 dm(z) }{  \int_{  \Uu^n  } \abs{  h_n(z) }^2 dm(z) }   = \frac{ \norm{ \Le_\mu^k h_n }_{L^2(\Uu^n) }^2 }{ \norm{  h_n }_{L^2(\Uu^n) }^2 }
\end{eqnarray}

By polarisation,
\begin{eqnarray}\label{Eq:FormuleEspCaracPlancherelPolarisee}
\Ee_{\Pl}^{(n)}\prth{ \prth{ \chi_\mu }^k \prth{ \overline{\chi_\nu} }^\ell } = \frac{  \int_{  \Uu^n  }   \Le_\mu^k h_n(z) \, \overline{ \Le_\nu^\ell h_n(z) } dm(z) }{  \int_{  \Uu^n  } \abs{  h_n(z) }^2 dm(z) }   = \frac{ \bracket{ \Le_\mu^k h_n, \Le_\nu^\ell h_n  }_{L^2(\Uu^n) }  }{ \norm{  h_n }_{L^2(\Uu^n) }^2 }
\end{eqnarray}
\end{proof}

\begin{remark}
The Kerov Central Limit Theorem concerns the Gaussian fluctuations of $ \widehat{\chi}_{ (k, 1^{n - k}) } := \chi_{ (k, 1^{n - k}) } / \chi_{1^n} $ when $ n \to \infty $ under $ \Pe\ell(n) $ (see \cite{IvanovOlshanskiKerov}). As we have the law of the couple $ (\chi_{ (k, 1^{n - k}) }, \chi_{1^n}) $ with \eqref{Eq:EvaluationPlancherelWithOperator}, we also have the law of $ \chi_{ (k, 1^{n - k}) } / \chi_{1^n} $. Nevertheless, the scalar product in \eqref{Eq:EvaluationPlancherelWithOperator} involves $n$ integrals, and passing to the limit in this expression is not straightforward. This problem will not occur in the case of the $ L $-measure. 
\end{remark}

\subsection{Reminders on Dirichlet characters} 

The content of this section is recalled for self-completeness and is likely to be well-known (see e.g. \cite[ch. 5]{IwanecKowalski} and references cited or \cite{SerreArithmetic, Tenenbaum}). A character of a group $G$ is a group homomorphism $ \chi : G \to \Cc^\times $, namely $ \chi(gh\inv) = \chi(g) \chi(h)\inv $ for all $ g, h \in G $. The set $ \widehat{G} $ of characters of $G$ forms a group once endowed with the pointwise multiplication $ \chi_1\cdot \chi_2(g) := \chi_1(g) \chi_2(g) $ and the inverse $ \chi\inv(g) := \chi(g)\inv $. If $ G $ is a finite group of cardinal $n$, all characters take values in the group of $n$-th roots of unity
\begin{align*}
\Uu_n := \ensemble{z \in \Cc \, / \, z^n = 1 } 
\end{align*}
as $ g^n = 1_G $ for all $ g \in G $ and $ \chi $ is a group morphism. In this case, the inverse of $ \chi $ is given by its complex conjugate, i.e. $ \chi\inv = \overline{\chi} $. In particular, a character here defined corresponds to a particular character of a dimension one representation of a group.

The main property of characters of finite abelian groups is


\begin{theorem}[Orthogonality, \cite{IwanecKowalski, Tenenbaum}] Let $G$ be a finite abelian group. Then, a basis of the space $ \Cc^G $ of $ \Cc $-valued functions on $G$ is given by the set of characters $ \widehat{G} $ that form an orthonormal basis of $ \Cc^G $ with respect to the scalar product $ \bracket{ \cdot, \cdot } $ defined by
\begin{align}\label{Eq:FirstOrthogonalityRelation}
\bracket{  f_1, f_2 } := \frac{1}{\abs{G} } \sum_{ g \in G  } f_1(g) \overline{f_2(g) }
\end{align}

One has moreover $ \abs{G} = \vert \widehat{G} \vert $. Last, setting $ \chib_g(f) := f(g) $ for the canonical evaluation on $G$, one has the dual orthogonality relation
\begin{align}\label{Eq:SecondOrthogonalityRelation}
\frac{1}{ \vert \widehat{G} \vert } \sum_{ \eta \in \widehat{G} } \chib_g(\eta) \chib_h(\eta) = \Unens{ g = h }
\end{align}
\end{theorem}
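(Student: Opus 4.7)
The plan is to prove the three assertions (orthonormality, $|G|=|\widehat{G}|$, and the dual relation) by bootstrapping from a single fundamental identity.

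First I would establish the key lemma: for every $\chi \in \widehat{G}$,
\[
\sum_{g \in G} \chi(g) = \abs{G} \, \Unens{\chi = 1_{\widehat{G}}},
\]
where $1_{\widehat{G}}$ is the trivial character. The forward direction is immediate. For the converse, if $\chi \neq 1_{\widehat{G}}$, pick $h \in G$ with $\chi(h) \neq 1$; then by the change of variables $g \mapsto hg$ (bijective on $G$),
\[
\sum_{g \in G} \chi(g) = \sum_{g \in G} \chi(hg) = \chi(h) \sum_{g \in G} \chi(g),
\]
so $(1-\chi(h))\sum_g \chi(g) = 0$ forces the sum to vanish. Applying this identity to the character $\chi_1 \overline{\chi_2} = \chi_1 \chi_2\inv$ (which lies in $\widehat{G}$ since $\widehat{G}$ is a group under pointwise multiplication, and since $\chi_2$ is $\Uu_n$-valued) yields immediately the orthonormality relation \eqref{Eq:FirstOrthogonalityRelation}.

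Next, the equality $\abs{G} = \vert \widehat{G} \vert$ would follow from the structure theorem for finite abelian groups: decompose $G \simeq \Zz/n_1\Zz \times \cdots \times \Zz/n_r\Zz$, note the natural group isomorphism $\widehat{G_1 \times G_2} \simeq \widehat{G_1} \times \widehat{G_2}$ (a character of a product splits as a pair of characters of the factors), and reduce to the cyclic case $\widehat{\Zz/n\Zz} \simeq \Zz/n\Zz$ established by sending a generator to an arbitrary $n$-th root of unity. A more conceptual route is to combine the already-proven orthonormality with dimension count: the characters form an orthonormal (hence linearly independent) family in $\Cc^G$, so $\vert \widehat{G} \vert \leq \abs{G}$, and the reverse inequality comes from explicitly exhibiting $\abs{G}$ distinct characters via the cyclic decomposition.

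Finally, for the dual relation \eqref{Eq:SecondOrthogonalityRelation}, I would exploit the biduality: the canonical evaluation map $G \to \widehat{\widehat{G}}$, $g \mapsto \chib_g$, is a group morphism that is injective (since characters separate points, which is a byproduct of the cyclic decomposition argument), and by the dimension count $\abs{G} = \vert \widehat{G} \vert = \vert \widehat{\widehat{G}} \vert$ it is an isomorphism. Applying the fundamental identity of the first step to the group $\widehat{G}$ and to the character $\chib_g \overline{\chib_h} = \chib_{gh\inv}$ of $\widehat{G}$ yields
\[
\frac{1}{\vert \widehat{G} \vert} \sum_{\eta \in \widehat{G}} \eta(g) \overline{\eta(h)} = \Unens{\chib_{gh\inv} = 1_{\widehat{\widehat{G}}}} = \Unens{g = h},
\]
the last equality by injectivity of biduality. (If one reads \eqref{Eq:SecondOrthogonalityRelation} literally with $\chib_g(\eta) \chib_h(\eta)$, the same argument instead gives $\Unens{g = h\inv}$, but since $\eta$ is $\Uu_n$-valued we have $\chib_h(\eta) = \overline{\eta(h\inv)\inv} \cdot \ldots$; the intended reading is with complex conjugation on the second factor.)

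The main obstacle is really concentrated in the existence of enough characters, namely the surjectivity of $G \to \widehat{\widehat{G}}$ (equivalently, that characters separate points of $G$). Everything else is formal manipulation of the translation-invariance identity. In the abelian case this obstacle is handled cleanly by the structure theorem, which is the only non-trivial input in the proof.
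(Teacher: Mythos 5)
Your proof is correct and complete. The paper itself does not prove this theorem — it cites \cite{IwanecKowalski, Tenenbaum} and merely remarks that the dual relation \eqref{Eq:SecondOrthogonalityRelation} follows from \eqref{Eq:FirstOrthogonalityRelation} together with the expansion of the indicator $\Unens{g=\cdot}$ in the orthonormal basis of characters — so there is no paper proof to compare against in detail, only that one-line hint.

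Your route for the dual relation is slightly different from the one hinted at in the paper. The paper's suggestion is a Fourier-expansion argument: since the $\abs{G}$ characters form an orthonormal basis of the $\abs{G}$-dimensional space $\Cc^G$, one can write $\Unens{g=h}$ as $\sum_{\eta} \bracket{\Unens{g=\cdot},\eta}\,\eta(h) = \frac{1}{\abs{G}}\sum_\eta \overline{\eta(g)}\eta(h)$, which is \eqref{Eq:SecondOrthogonalityRelation} once one inserts $\abs{G}=\vert\widehat{G}\vert$. You instead invoke biduality: you apply the translation-invariance lemma to the group $\widehat{G}$ and the character $\chib_g\overline{\chib_h}$ of $\widehat{G}$, then use injectivity of $G\to\widehat{\widehat{G}}$ (which you correctly flag as the one genuinely non-formal input, requiring the structure theorem or an equivalent separation-of-points argument). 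Both routes need $\abs{G}=\vert\widehat{G}\vert$; the basis-expansion route then needs completeness of the character basis, while yours needs injectivity of biduality — the two are essentially equivalent facts, but your packaging makes the dependence on the structure theorem more explicit, which is a pedagogical plus. You also correctly observe that \eqref{Eq:SecondOrthogonalityRelation} as printed is missing a complex conjugation on the second factor (without it the right-hand side would be $\Unens{g=h^{-1}}$); the paper's subsequent applications, e.g. the computation of $\Espr{\infty,q}{\chib_k^m\overline{\chib}_k^\ell}$, do include the conjugation, confirming this is a typographical slip in the statement rather than an intended convention.
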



The property \eqref{Eq:SecondOrthogonalityRelation} uses \eqref{Eq:FirstOrthogonalityRelation} and the orthogonal basis expansion of the function $ \Unens{ g } : h \mapsto \Unens{ g = h } $.

$ $

We denote by $ G_q $ the group $ (\Zz/q\Zz)^\times $, and, with a slight abuse of notation, the set of Dirichlet characters by $ \widehat{G}_q $. As recalled in the introduction, this is the set of characters of $ G_q $ that are extended by periodicity to $ \Zz $. The second orthogonality relation \eqref{Eq:SecondOrthogonalityRelation} becomes thus
\begin{align}\label{Eq:ExtendedSecondOrthogonalityRelation}
\frac{1}{ \vert \widehat{G}_q \vert } \sum_{ \eta \in \widehat{G} } \chib_k(\eta) \chib_\ell(\eta) = \Unens{ k \equiv \ell \,\prth{ \Mod q } }
\end{align}
%
%
%

As a Dirichlet character can take the value $0$, we define the abstract symbol $ \partial $ such that 
\begin{align}\label{Def:AngleDirichletCharacter}
\chi(n) = e^{ i \theta_\chi(n) }, \quad\quad\quad e^{i \partial } := 0
\end{align}
that allows to define the angle $ \theta_\chi : \Zz \to \Zz \cup \ensemble{ \partial } $ of a Dirichlet character $ \chi $ .


\begin{example} 
A classical Dirichlet character is given by the Legendre symbol $ \prth{ \frac{a}{p} } $ which is a Dirichlet character modulo $p$. Define $ \operatorname{sq} : x \mapsto x^2 $ and let $ \operatorname{sq}(\Zz/p\Zz) $ be the set of squares in $ (\Zz/p\Zz)^\times $. Then, 
\begin{align*}
\prth{ \frac{a}{p} } = \Unens{a  \, \in \, \operatorname{sq}(\Zz/p\Zz) } - \Unens{ a  \, \notin \, \operatorname{sq}(\Zz/p\Zz)  } \, \in \, \ensemble{ -1, 0, 1 }
\end{align*}

Such a function is an analogue on $ (\Zz/p\Zz)^\times $ of the sign function defined by $ \operatorname{sgn} : x \in \Rr  \mapsto \Unens{ x \, \in \, \operatorname{sq}(\Rr^\times) } - \Unens{ x \, \notin \, \operatorname{sq}(\Rr^\times) } \in \ensemble{ -1, 0, 1 } $.
\end{example}


\begin{remark}
The prime factor decomposition $ q = \prod_{p \in \Pe } p^{v_p(q) } $ and the chinese remainder theorem $ (\Zz/q\Zz)^\times  \simeq \prod_{p \divise q } (\Zz/p^{v_p(q) }\Zz)^\times $ could allow to consider the sole case where $q$ is an integer power of a prime number. 
\end{remark}


Associated to a Dirichlet character $ \chi $ is the $ L $-function defined for $ s \in \ensemble{ \Re > 1 } $ by \eqref{Def:L-Function}. One can extend the definition to $ s \in \ensemble{ 1/2 \leq  \Re \leq 1 } $ by means of a functional equation (see e.g. \cite{Tenenbaum}). Equation \eqref{Def:L-Function} defines the linear form on $ \widehat{G}_q $ given by
\begin{align*}
L_s := \sum_{n \geq 1 } \frac{ 1 }{ n^s } \chib_n
\end{align*}

When $ \widehat{G}_q $ is endowed with a probability measure, these linear forms become random variables that we call for the sake of simplicity $ L $-random variables.

\subsection{Moments of characters under the $ L $-measure}

In view of \eqref{Eq:EvaluationPlancherelWithOperator}, a natural question concerns now the properties of the evaluations on $ ( \widehat{G}_q, \Pp_{s, q} ) $. The following lemma shows that the same type of structure occurs.

\begin{lemma}[Structure of evaluations and $ L $-random variables]\label{Lemma:StructureL-evaluations} For $ t \in \ensemble{ \Re > 1 } $, define the operators
\begin{align*}
D_n & : f \in L^2([0, 1]) \mapsto f(n \, \cdot )   \in L^2([0, 1]) \\
\Le_t & := \sum_{n \geq 1} \frac{1}{n^t} D_n  
\end{align*}

We recall that the polylogarithm function $ \Li_s $ is defined for $ s \in \ensemble{ \Re > 1 } $ and $ \abs{z} \leq 1 $ by
\begin{align*}
\Li_s(z) := \sum_{n \geq 1} \frac{ z^n }{n^s} 
\end{align*}

Define moreover
\begin{align*}
\mu_q & := \frac{1}{q} \sum_{k = 1}^q \delta_{ k /q } \\
\bracket{ f, g  }_{ L^2(\mu_q) } & := \frac{1}{q}  \sum_{k = 1}^q f\prth{ k/q } \overline{g\prth{ k/q }} \\
f_s  & := \Le_s e = \Li_s \circ \, e , \quad\quad e(\theta) := e^{2 i \pi \theta }
\end{align*}

Then, one has for all $ s, t, v \in \ensemble{ \Re > 1 } $ and for all $ x, y \in \Rr $ 
\begin{align}\label{Eq:EvaluationDirichletWithOperator}\begin{aligned}
\Espr{s, q}{ e^{ x \chib_k + y \overline{\chib_\ell } } } & = \frac{\bracket{ e^{ x D_k } f_s, \, e^{ y D_\ell } f_s  }_{ L^2(\mu_q) }}{ \bracket{   f_s, f_s }_{ L^2(\mu_q) } }  \\
\Espr{s, q}{ e^{ x L_t + y \overline{L_v} } } & = \frac{\bracket{ e^{ x \Le_t } f_s, \, e^{ y \Le_v } f_s  }_{ L^2(\mu_q) }}{ \bracket{   f_s, f_s }_{ L^2(\mu_q) } }  
\end{aligned}\end{align}
\end{lemma}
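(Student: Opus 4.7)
The approach I would take is to expand both sides as double Taylor series in $(x, y)$ and match coefficients term by term. Both expansions will reduce to the same combinatorial building block: a double sum over $(m, n) \in (\Nn^*)^2$ with weight $(mn)^{-s}$ restricted by a congruence of the form $k^a m \equiv \ell^b n \!\! \pmod q$, arising from character orthogonality on one side and from the discrete Fourier identity $\frac{1}{q} \sum_{j = 1}^q e^{2 i \pi j \nu/q} = \Unens{ q \divise \nu }$ on the other.

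First, for the left hand side of the first identity, I would use the multiplicativity $\chi(k)^a = \chi(k^a)$ to expand the exponential, together with $\abs{L_s(\chi)}^2 = \sum_{m, n \geq 1} (mn)^{-s}\chi(m)\overline{\chi(n)}$, so as to rewrite
\begin{align*}
\Espr{s, q}{ e^{x \chib_k + y \overline{\chib_\ell} } } = \frac{1}{Z_{s,q}} \sum_{a, b, m, n} \frac{x^a y^b}{a! \, b! \, (mn)^s} \sum_{\chi \in \widehat{G}_q} \chi(k^a m) \overline{\chi(\ell^b n)}.
\end{align*}
The second orthogonality relation \eqref{Eq:ExtendedSecondOrthogonalityRelation} then collapses the inner sum to $\varphi(q) \, \Unens{ k^a m \, \equiv \, \ell^b n \, (\!\!\Mod q) }$.

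Next, for the right hand side, I would use the iteration $D_k^a f_s(\theta) = f_s(k^a \theta) = \sum_m m^{-s} e^{2i\pi m k^a \theta}$ and its analogue for $D_\ell$ to write
\begin{align*}
\bracket{ e^{x D_k} f_s, \, e^{y D_\ell} f_s }_{L^2(\mu_q)} = \sum_{a, b, m, n} \frac{x^a y^b}{a! \, b! \, (mn)^s} \cdot \frac{1}{q} \sum_{j = 1}^q e^{2 i \pi j (m k^a - n \ell^b)/q}.
\end{align*}
The Fourier identity above reduces the $j$-sum to the same congruence indicator as on the left, so the two numerators agree up to the factor $\varphi(q)$. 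Specialising both identities at $x = y = 0$ (both sides equal $1$) forces $Z_{s, q}/\varphi(q) = \bracket{ f_s, f_s }_{L^2(\mu_q)}$, which closes the proof of the first formula.

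The second formula follows by the same scheme with $\Le_t$ in place of $D_k$: the operator identity $\Le_t^a f_s(\theta) = \sum_{n_1, \dots, n_a} (n_1 \cdots n_a)^{-t} f_s(n_1 \cdots n_a \, \theta)$ mirrors $L_t(\chi)^a = \sum_{n_1, \dots, n_a} (n_1 \cdots n_a)^{-t} \chi(n_1 \cdots n_a)$, and the orthogonality and Fourier steps proceed identically. The only technical obstacle is the repeated interchange of summations, but for $\Rep{s}, \Rep{t}, \Rep{v} > 1$ and $x, y$ bounded all nested series converge absolutely, so Fubini legitimises every swap carried out above.
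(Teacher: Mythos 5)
Your proof is correct and follows essentially the same strategy as the paper's: expand the exponential, use multiplicativity of characters and the (second) orthogonality relation to produce the congruence indicator $\Unens{k^a m \equiv \ell^b n \, (\Mod q)}$, convert that indicator via the discrete Fourier identity $\frac{1}{q}\sum_{j=1}^q e^{2i\pi j\nu/q} = \Unens{q \divise \nu}$ into an $L^2(\mu_q)$ inner product of exponential sums, and close by evaluating at $x=y=0$ to identify $Z_{s,q}/\varphi(q) = \bracket{f_s,f_s}_{L^2(\mu_q)}$. The only organizational difference is that the paper first establishes the even-moment identity $\Ee_{s,q}(|L_t|^{2k}) = \norm{\Le_t^k f_{s,q}}^2_{L^2(\mu_q)}$ and then invokes polarisation and generating series in $x,y$, whereas you expand the polarised generating function directly and match Taylor coefficients; this is a shade more streamlined but is the same underlying computation. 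One small imprecision: since the lemma allows $s,t,v$ complex with $\Rep{\cdot} > 1$, the weight coming from $\abs{L_s(\chi)}^2$ is $m^{-s}n^{-\overline{s}}$ rather than $(mn)^{-s}$; this is harmlessly absorbed by the conjugation already built into the $L^2(\mu_q)$ inner product on the right-hand side, so the matching goes through unchanged, but the shorthand $(mn)^{-s}$ is only literally correct for real $s$.
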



\begin{proof}

We only treat the case of $L$-random variables, the case of evaluations being identical. 

For $ k \geq 0 $, we have
\begin{align*}
\Ee_{s, q}\prth{ \abs{ L_t }^{2k}  }     & =    \frac{1}{ Z_{s, q} } \sum_{ \chi \in \widehat{G}_q } \abs{ L_s(\chi) }^2  \abs{ L_t(\chi) }^{2k} \\
				& =     \frac{1}{ Z_{s, q} } \sum_{ \chi \in \widehat{G}_q } \sum_{m, n \geq 1 }  \frac{ \chi(n) \overline{\chi(m) } }{ n^s m^{\overline{s} } } \sum_{ \substack{ m_1, \dots , m_k \geq 1 \\ n_1, \dots , n_k \geq 1 } } \prod_{ \ell = 1 }^k  \frac{ \chi(n_\ell) \overline{\chi(m_\ell) } }{ n_\ell^t m_\ell^{\overline{t} } } \\
				& =   \frac{1}{ Z_{s, q} } \sum_{ \substack{ m, m_1, \dots , m_k \geq 1 \\ n, n_1, \dots , n_k \geq 1 } }   \frac{ 1 }{ n^s m^{\overline{s} } } \prod_{ \ell = 1 }^k  \frac{ 1 }{ n_\ell^t m_\ell^{\overline{t} } } \sum_{ \chi \in \widehat{G}_q }  \chi\prth{ n \prod_{ \ell = 1 }^k n_\ell } \overline{\chi \prth{ m \prod_{ \ell = 1 }^k m_\ell } }  
\end{align*}				
using the multiplicativity of characters. The orthogonality relation \eqref{Eq:FirstOrthogonalityRelation} gives then			
\begin{align*}
\Ee_{s, q}\prth{ \abs{ L_t }^{2k}  }    =   \frac{ \varphi(q) }{ Z_{s, q} }  \sum_{ \substack{ m, m_1, \dots , m_k \geq 1 \\ n, n_1, \dots , n_k \geq 1 } }   \frac{ 1 }{ n^s m^{\overline{s} } } \prod_{ \ell = 1 }^k  \frac{ 1 }{ n_\ell^t m_\ell^{\overline{t} } } \Unens{  n n_1 \cdots n_k \equiv m m_1 \cdots m_k \, \prth{ \Mod q } }  
\end{align*}

Recall the well known equality 
\begin{align*}
\Unens{ a \in q \Zz} = \frac{1}{q} \sum_{k = 1}^q e^{2 i \pi a k / q } = \int_0^1 e^{2 i \pi a \theta } \mu_q(d\theta), \qquad \mu_q := \frac{1}{q} \sum_{k = 1}^q \delta_{k/q}
\end{align*}

It implies
\begin{align*}
\Ee_{s, q}\prth{ \abs{ L_t }^{2k}  } & =   \frac{ \varphi(q) }{ Z_{s, q} }   \sum_{ \substack{ m, m_1, \dots , m_k \geq 1 \\ n, n_1, \dots , n_k \geq 1 } }   \frac{ 1 }{ n^s m^{\overline{s} } } \prod_{ \ell = 1 }^k  \frac{ 1 }{ n_\ell^t m_\ell^{\overline{t} } }  \int_0^1 e^{ 2 i \pi \theta ( n \, n_1  \cdots   n_k -  m \, m_1  \cdots   m_k )  } \mu_q(d\theta) \\
				& =   \frac{ \varphi(q) }{ Z_{s, q} }   \int_0^1 \sum_{  n, n_1, \dots , n_k \geq 1   }   \frac{ e^{ 2 i \pi \theta  n \, n_1  \cdots   n_k } }{ n^s \, (n_1  \cdots   n_k)^t  }   \sum_{  m, m_1, \dots , m_k \geq 1   }   \frac{ e^{ - 2 i \pi \theta  m \, m_1  \cdots   m_k } }{ m^{ \overline{s} } \, (m_1  \cdots   m_k)^{ \overline{t} }  } \mu_q(d\theta) \\
				& =   \frac{ \varphi(q) }{ Z_{s, q} }   \int_0^1 \abs{ \sum_{  n, n_1, \dots , n_k \geq 1   }   \frac{ e^{ 2 i \pi \theta  n \, n_1  \cdots   n_k } }{ n^s \, (n_1  \cdots   n_k)^t  }   }^2  \mu_q(d\theta)
\end{align*}

Here, we have used the Fubini theorem relative to sums and integrals which is justified by the absolute convergence of the sums for $ s, t \in \ensemble{ \Re > 1 } $.

Using $ e : \theta \mapsto e^{2 i \pi \theta } $ and $ D_n e(\theta) := e(n\theta) $, one can thus write
\begin{align*}
\Ee_{s, q}\prth{ \abs{ L_t }^{2k}  } = \frac{ \varphi(q) }{ Z_{s, q} } \norm{  \sum_{  n, n_1, \dots , n_k \geq 1   }   \frac{ D_{  n \, n_1  \cdots   n_k } e }{ n^s \, (n_1  \cdots   n_k)^t  }  }^2_{ L^2(\mu_q) }
\end{align*}

Since $ D_{mn} = D_m D_n = D_n D_m $, one can write
\begin{align*}
\Ee_{s, q}\prth{ \abs{ L_t }^{2k}  } = \frac{ \varphi(q) }{ Z_{s, q} } \norm{   \prth{  \sum_{  n  \geq 1   } \frac{1}{ n^t } D_n }^k \sum_{  n  \geq 1   } \frac{1}{ n^s } D_n  e  }^2_{ L^2(\mu_q) } = \frac{ \varphi(q) }{ Z_{s, q} } \norm{ \Le_t^k \Le_s e  }^2_{ L^2(\mu_q) }
\end{align*}

Setting $ k = 0 $, one finds that $  \norm{  \Le_s e  }^2_{ L^2(\mu_q) } =  Z_{s, q} / \varphi(q)  $ and using $ f_{s, q} := \Le_s e / \norm{  \Le_s e  }_{ L^2(\mu_q) }$, one thus has
\begin{align}\label{Eq:MomentsL-functionsNonPolarised}
\Espr{s, q}{ \abs{ L_t }^{2k}  }   =  \frac{ \norm{ \Le_t^k \Le_s e  }^2_{ L^2(\mu_q) }  }{ \norm{ \Le_s e  }^2_{ L^2(\mu_q) } } = \norm{ \Le_t^k f_{s, q}  }^2_{ L^2(\mu_q) }
\end{align}

Moreover, one has 
\begin{align*}
\frac{ Z_{s, q} }{ \varphi(q) } \Ee_{s, q}\prth{ \abs{ L_t }^{2k}  }  & =  \sum_{ \substack{ m, m_1, \dots , m_k \geq 1 \\ n, n_1, \dots , n_k \geq 1 } }   \frac{ 1 }{ n^s m^{\overline{s} } } \prod_{ \ell = 1 }^k  \frac{ 1 }{ n_\ell^t m_\ell^{\overline{t} } } \Unens{  n n_1 \cdots n_k \equiv m m_1 \cdots m_k \, \prth{ \Mod q } }  \\
                 & \leq \sum_{ \substack{ m, m_1, \dots , m_k \geq 1 \\ n, n_1, \dots , n_k \geq 1 } }   \frac{ 1 }{ n^s m^{\overline{s} } } \prod_{ \ell = 1 }^k  \frac{ 1 }{ n_\ell^t m_\ell^{\overline{t} } } = \abs{ \zeta(s) \zeta(t)^k }^2
\end{align*}
so $ \sum_{k \geq 0} \Ee_{s, q}\prth{ \abs{ L_t }^{2k} } \rho^k / k! < \infty $ for all $ \rho \in \Rr_+ $. By polarisation and generating series in $ x, y $, one thus finds the result.
\end{proof}


\begin{remark} The auto-reproducing kernel equivalent to \eqref{Def:AutoreproducingKernelPermutations} is here
\begin{align*}
K(n_1, \dots, n_k \, ; \, m_1, \dots, m_k) := \sum_{ \chi \in \widehat{G}_q } \prod_{ \ell = 1 }^k \chi(n_\ell) \overline{\prod_{ \ell = 1 }^k \chi(m_\ell)} = \varphi(q) \Unens{ n_1 \cdots n_k \equiv m_1 \cdots m_k \, \prth{ \Mod q }  }
\end{align*}

It is autoreproduced for $ L^2(\mu_q) $ in the sense that
\begin{align*}
K(n_1, \dots, n_k \, ; \, m_1, \dots, m_k) = \bracket{  K(n_1, \dots, n_k \, ; \,q \cdot ), K(m_1, \dots, m_k \, ; \, q \cdot )  }_{ L^2(\mu_q) }
\end{align*}
since we defined $ \mu_q = q\inv \sum_{k = 1}^q \delta_{k/q} $ in place of $ \mu_q = q\inv \sum_{k = 1}^q \delta_k $ (hence the multiplication by $q$). The autoreproduction is a consequence of the fact that $ \abs{ \chi(a) }^2 = 1  $ for all $ a \in \intcrochet{1, q} $.
\end{remark}

\section{Fluctuations when $ q \to +\infty $}

\subsection{Evaluations}\label{SubSec:FluctuationsEvaluations}  

We now prove theorem \ref{Theorem:ConvergenceAngles}.

\begin{proof}
We use the method of moments. We have 
\begin{align*}
\Espr{s, q}{ \chib_m^k \overline{\chib}_m^\ell }  =  \frac{ \bracket{  D_m^k f_s ,  D_m^\ell  f_s   }_{ L^2(\mu_q) }  }{ \bracket{ f_s , f_s   }_{ L^2(\mu_q) } }, \quad\quad\quad f_s :=  \Li_s \circ\, e 
\end{align*}

Each term of the ratio has the same form. It is thus enough to prove the convergence of the numerator when $ q \to +\infty $. As 
\begin{align*}
\bracket{  D_m^k f_s ,  D_m^\ell  f_s   }_{ L^2(\mu_q) } = \int f_s\prth{e^{ 2 i \pi m^k \theta } } \overline{f_s\prth{e^{ 2 i \pi m^\ell \theta } } } \mu_q(d\theta) =: \frac{1}{q}  \sum_{j = 1}^q g_{m, s}^{(k, \ell) } \prth{\frac{j}{q } }
\end{align*}
we have a Riemann sum, thus, it is enough to show that the function $ \theta \mapsto g_{m, s}^{(k, \ell) }(\theta) $ is continuous and integrable on $ \prth{0, 1} $ to get the convergence. 
This fact is clear, given the form of the function $ f_s = \Li_s\!\circ e $ (bounded by $ \abs{\zeta (s) } $ on $ \crochet{0, 1} $), hence the result :
\begin{align}\label{Eq:ConvergenceRiemannSum}
\int_0^1 g_{m, s}^{(k, \ell) }(\theta) d\mu_q(\theta) \tendvers{q}{+\infty }   \int_0^1 g_{m, s}^{(k, \ell) }(\theta) d\theta
\end{align}

We now identify the limit. Set $ \mu_\infty(d\theta) = \Unens{ 0 \leq \theta \leq 1} d\theta $. Then,
\begin{align*}
\bracket{  D_m^k f_s ,  D_m^\ell  f_s   }_{ L^2(\mu_\infty) } & = \int_0^1 f_s(e^{ 2 i \pi m^k \theta } ) \overline{f_s\prth{e^{ 2 i \pi m^\ell \theta } } }  d\theta  \\
              & = \sum_{n_1, n_2 \geq 1 } \frac{1}{ (n_1 n_2)^s } \Unens{ n_1 m^k  = n_2 m^\ell } \\
              & = m^{ks } m^{\ell s } \sum_{n_1, n_2 \geq 1 } \frac{1}{ (n_1 m^k  n_2 m^\ell )^s } \Unens{ n_1 m^k = n_2 m^\ell } \\ 
              & = m^{ks } m^{\ell s } \sum_{n_1  \geq 1 } \frac{1}{ (n_1 m^k )^{2s} }  \sum_{n_2 \geq 1 } \Unens{ n_1 m^k  = n_2 m^\ell } \\ 
              & = m^{-ks } m^{\ell s } \sum_{n \geq 1} \frac{1}{ n^{2 s} } \Unens{ m^\ell  \divise m^k  n }
\end{align*}

Note that the case $ k = \ell = 0 $ gives $ \bracket{   f_s ,    f_s   }_{ L^2(\mu_\infty) }  =  \zeta(2s)  $.
\begin{align*}
\mbox{Now, } \qquad \bracket{ D_m^k f_s , D_m^\ell f_s }_{ L^2(\mu_\infty) } & = m^{ ( \ell - k) s } \sum_{n \geq 1 } \frac{1}{ n^{2 s} } \Unens{ m^\ell \divise m^k  n } \\
               & = m^{ ( \ell - k ) s } \sum_{n \geq 1 } \frac{1}{ n^{2 s} } \prth{ \Unens{ \ell \leq k } + \Unens{ \ell > k, \, m^{\ell - k } \divise n } } \\
               & = \Unens{ \ell \leq k } \frac{1}{ m^{ (k - \ell ) s } } \zeta(2s) + \Unens{\ell > k} m^{ (\ell - k) s } \sum_{n \geq 1 } \frac{1}{  n^{2 s} } \Unens{  m^{\ell - k } \divise n }   \\ 
               & = \Unens{ \ell \leq k } \frac{1}{ m^{ (k - \ell ) s } } \zeta(2s) + \Unens{\ell > k} m^{ (\ell - k) s }\sum_{d \geq 1 } \frac{1}{  (d  m^{\ell - k } )^{2 s} } \\
               & = \Unens{ \ell \leq k } \frac{1}{ m^{ (k - \ell ) s } } \zeta(2s) + \Unens{\ell > k} \frac{1}{ m^{ (\ell - k) s } }  \zeta(2s) \\
               & = \frac{1}{ m^{ \abs{ \ell - k } s } } \zeta(2s)
\end{align*}

Thus, writing $ \Espr{s, \infty }{  \cdot } $ for the limiting measure, we get
\begin{align*}
\Espr{s, \infty}{ \chib_m^k \overline{\chib}_m^\ell }  =  \frac{\bracket{  D_m^k f_s ,  D_m^\ell  f_s   }_{ L^2(\mu_\infty) }}{ \bracket{ f_s , f_s   }_{ L^2(\mu_\infty) } } =  e^{- s \log ( m )   \abs{ k - \ell }  } 
\end{align*}
%
%
%

The symmetric stable random variable $ St(\alpha) $ of parameter $ \alpha \in (0, 2] $ is defined for all $ x \in \Rr $ by
\begin{align*}
\Esp{ e^{i x \, St(\alpha)}  } = e^{ - \abs{x}^\alpha } 
\end{align*}

In the particular case of $ \alpha = 1 $,  $ St(1) = \Ce $, a Cauchy-distributed random variable whose Lebesgue-density is given by $ \Prob{ \Ce \in dx } = \frac{1}{1 + x^2} \frac{dx}{\pi } $. Thus, 
\begin{align*}
\Espr{s, \infty}{ \chib_m^k \overline{\chib}_m^\ell   }   =   \Esp{ e^{ i ( k - \ell )  s \log ( m ) \Ce    } } 
\end{align*}
hence the result.
\end{proof}


\begin{remark}\label{Rk:ConvergenceRationalFunctions}
It is enough to prove the convergence \eqref{Eq:ConvergenceRiemannSum} on rational functions of the disk of the type $ z \mapsto \prod_{k = 1}^m \frac{1}{z^j - x_k} $ for $ 0 < x_k < 1 $. Indeed, one has the following integral representation of the polylogarithm
\begin{align*}
\Li_s(z) & = \sum_{n \geq 1} \frac{z^n}{n^s} = \sum_{n \geq 1} z^n \frac{1 }{\Gamma(s) } \int_{\Rr_+ } e^{-nt } t^s \frac{dt}{t} = \frac{1 }{\Gamma(s) } \int_{\Rr_+ }  \sum_{n \geq 1} z^n e^{-nt } t^s \frac{dt}{t} \\
              & = \frac{1 }{\Gamma(s) } \int_{\Rr_+ } \frac{z e^{-t } }{1 - z e^{-t} } t^s \frac{dt}{t} 
\end{align*}
as $ \abs{z } = 1 $ and $ e^{-t } < 1 $ for $ t > 0 $. Using the Fubini theorem, it is thus enough to prove 
\begin{align*}
\int_\Uu  \frac{ z^{k_1}   }{1 - z^{k_1} e^{-t_1} }  \frac{z^{-k_2}   }{1 - z^{-k_2} e^{-t_2} }  \mu_q \prth{ \frac{d^*z}{z } }  \tendvers{q}{ + \infty } \int_\Uu  \frac{ z^{k_1}   }{1 - z^{k_1} e^{-t_1} }  \frac{ z^{-k_2} }{1 - z^{-k_2} e^{-t_2} }  \frac{d^*z}{z }
\end{align*}
which is clear. As we have moreover

\begin{align*}
\int_\Uu  \frac{ z^{k_1} }{1 - z^{k_1} e^{-t_1} }  \frac{ z^{-k_2} }{1 - z^{-k_2} e^{-t_2} }  \frac{d^*z}{z } & = \sum_{r, \ell \geq 0 } e^{ - (r t_1 + \ell  t_2) } \int_\Uu z^{k_1 (r + 1) - k_2(\ell + 1) }  \frac{d^*z}{z } \\
              & = e^{t_1 + t_2}  \sum_{r, \ell \geq 1 } e^{ - (r t_1 + \ell  t_2) } \Unens{ k_1 r = k_2 \ell }
\end{align*}
we end up with the following equality that can be proven with a direct computation
\begin{align*}
\int_{ \Rr_+^2 }  \sum_{\ell_1,  \ell_2 \geq 1 } e^{ - (\ell_1 t_1 + \ell_2  t_2) } \Unens{ k^m \ell_1 = k^r \ell_2 } \, t_1^s \frac{dt_1}{t_1}   \, t_2^s \frac{dt_2}{t_2}  = \Gamma(s) k^{ s \abs{ m - r } }
\end{align*}
\end{remark}

\subsection{Joint evaluations} 

%
%


We now prove theorem \ref{Theorem:JointEvaluationFluctuations}.

\begin{proof}
Set $ \kappa := \prod_{j = 1}^\ell p_j^{k_j} $ and $ \rho := \prod_{j = 1}^\ell p_j^{r_j} $ for $ (k_j)_j $ and $ (r_j)_j $ integers. Then, 
\begin{align*}
\Espr{q, s}{ \prod_{j = 1}^\ell \chib_{p_j}^{k_j}   \overline{ \chib_{p_j}^{r_j} } } & = \Espr{q, s}{   \chib_\kappa \overline{ \chib_\rho } } \tendvers{q}{+\infty } \frac{1}{\zeta(s) } \sum_{n_1, n_2 \geq 1} \frac{1}{(n_1 n_2)^s } \Unens{\kappa n_1 = \rho n_2 }
\end{align*}

Now, recalling that $ a_+ := \max(a, 0) $, define
\begin{align*}
N_1 := \prod_{j = 1}^\ell p_j^{ (k_j - r_j)_+ },  \qquad  N_2 := \prod_{j = 1}^\ell p_j^{ (r_j - k_j)_+ }
\end{align*}
so that $ \ensemble{ \kappa n_1 = \rho n_2 } = \ensemble{ N_1 n_1 = N_2 n_2 } $ and $ \gcd(N_1, N_2) = 1 $. We have 
\begin{align*}
\sum_{n_1, n_2 \geq 1 } \frac{1}{(n_1 n_2)^s } \Unens{ N_1 n_1 = N_2 n_2 } & = (N_1 N_2)^s \sum_{n_1  \geq 1 } \frac{1}{ (n_1 N_1)^{2s} } \Unens{ N_2 \divise n_1 N_1 } \\
              & = (N_1 N_2)^s \sum_{n_1  \geq 1 } \frac{1}{ (n_1 N_1)^{2s} } \Unens{ N_2 \divise n_1   } \quad \mbox{as }\gcd(N_1, N_2) = 1 \\
              & = (N_1 N_2)^s \sum_{ n  \geq 1 } \frac{1}{ (N_2 n N_1)^{2s} } = \frac{\zeta(s) }{(N_1 N_2)^s } \\
              & = \zeta(s) \prod_{j = 1}^\ell p_j^{ -s [ (k_j - r_j)_+ + (r_j - k_j)_+ ] }  \\
              & = \zeta(s) \prod_{j = 1}^\ell p_j^{ -s \abs{ k_j - r_j} }
\end{align*}
hence the result.
\end{proof}

\subsection{Speed of convergence in the Wasserstein metric}

One can adapt the proof of theorem \ref{Theorem:ConvergenceAngles} to get the speed of convergence in the Wasserstein distance. 

\begin{lemma}[Speed of convergence in the Wasserstein metric]\label{Lemma:SpeedOfConvergence} For all bounded continuous functions $ h : \Cc \to \Rr $, we have
\begin{align}\label{Eq:WassersteinDistance}
\Espr{s, q}{ h(  \chib_m )  } = \Esp{ h\prth{ e^{ i s \log(m) \Ce } }  } + O\prth{  \frac{\zeta(s)^2 }{\zeta(2s) } \frac{  \norm{ h  }_\infty }{ q^s } }
\end{align}
\end{lemma}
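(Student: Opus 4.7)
The plan is to quantify the argument of Theorem~\ref{Theorem:ConvergenceAngles} using the Riemann-sum representation of the expectation and Poisson summation. Expanding $h$ on $ \Uu $ in Fourier series $ h(e^{2i\pi\alpha}) = \sum_{k \in \Zz} \widehat{h}(k)\, e^{2ik\pi\alpha} $ and repeating the character-orthogonality computation from Lemma~\ref{Lemma:StructureL-evaluations} with $ \chi(m)^k = \chi(m^k) $ gives the ratio representation
\begin{align*}
\Espr{s,q}{h(\chib_m)} = \frac{A_q}{B_q}, \qquad A_q := \int_0^1 G_h(\theta)\, \overline{f_s(\theta)}\, d\mu_q(\theta), \qquad B_q := \int_0^1 |f_s(\theta)|^2\, d\mu_q(\theta),
\end{align*}
with $ G_h(\theta) := \sum_k \widehat{h}(k)\, f_s(m^k\theta) $; replacing $ \mu_q $ by Lebesgue measure on $ [0,1] $ yields $ A_\infty/B_\infty = \Esp{h(e^{is\log(m)\Ce})} $, exactly as at the end of the proof of Theorem~\ref{Theorem:ConvergenceAngles}.

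The key quantitative input is Poisson summation: for any $ F $ on $ [0,1] $ with absolutely convergent Fourier series, $ \int F\, d\mu_q - \int F\, d\theta = \sum_{j \neq 0} \widehat{F}(qj) $. Applied to $ F = |f_s|^2 $, whose Fourier coefficients $ \widehat{|f_s|^2}(n) = \sum_{n_1 - n_2 = n} (n_1 n_2)^{-s} $ satisfy $ \bigl|\widehat{|f_s|^2}(n)\bigr| \leq \zeta(s)/|n|^s $ (using $ n_1 n_2 \geq |n|\cdot \min(n_1,n_2) $ and $ \sum_{n_2 \geq 1} n_2^{-s} = \zeta(s) $), one obtains $ |B_q - \zeta(2s)| \leq 2\zeta(s)^2/q^s $. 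A parallel estimate for $ F = G_h\, \overline{f_s} $, combined with $ |\widehat{h}(k)| \leq \norm{h}_\infty $, should produce the matching numerator bound $ |A_q - A_\infty| \leq c_{s,m}\, \norm{h}_\infty\, \zeta(s)^2/q^s $. The conclusion then follows from the elementary inequality $ |A_q/B_q - A_\infty/B_\infty| \leq |A_q - A_\infty|/B_q + |A_\infty||B_q - B_\infty|/(B_q B_\infty) $, together with $ |A_\infty| \leq \norm{h}_\infty B_\infty $, $ B_\infty = \zeta(2s) $ and $ B_q \to \zeta(2s) $.

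The main obstacle is the numerator bound $ |A_q - A_\infty| $ uniformly in $ \norm{h}_\infty $. Indeed, the Fourier coefficient $ \widehat{G_h \overline{f_s}}(qj) = \sum_{k} \widehat{h}(k) \sum_{m^k n_1 - n_2 = qj} (n_1 n_2)^{-s} $ exhibits, for each fixed $ k $ and $ j $, a ``boundary'' contribution from $ n_1 = \lceil (qj+1)/m^k\rceil $ of size $ \sim m^{ks}/(qj)^s $, which naively blows up when summed against arbitrary $ \widehat{h}(k) $. Making the $ \norm{h}_\infty $-uniform estimate work requires exploiting the Euler-product structure of $ \Li_s $ to exhibit cancellation across $ k $; equivalently, using the integral representation of the polylogarithm from Remark~\ref{Rk:ConvergenceRationalFunctions}, $ A_q - A_\infty $ can be re-expressed as a double Laplace-type integral against the heat kernel $ e^{-q|j|t_1}\, t_1^{s-1} t_2^{s-1} $, from which the $ 1/q^s $ decay reads off cleanly after exchanging the order of integration and using $ \sum_{j \geq 1} 1/j^s = \zeta(s) $ once more for the summation over $ j $.
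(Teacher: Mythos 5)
Your approach is essentially the paper's. The paper also argues termwise on the moment/Fourier side: it establishes $\Espr{s,q}{\chib_m^k\overline{\chib_m}^\ell}=\Esp{e^{i(k-\ell)s\log(m)\,\Ce}}+O\prth{(m^{sk}+m^{s\ell})\zeta(s)^2/(q^s\zeta(2s))}$ by splitting $\sum_{n_1,n_2}(n_1n_2)^{-s}\Unens{n_1m^k\equiv n_2m^\ell\ (\Mod q)}$ into its exact-equality part (the main term $\zeta(2s)/m^{s\abs{k-\ell}}$) and a sign-definite off-diagonal remainder, which is precisely what your Poisson-summation step produces; your denominator bound $\abs{B_q-\zeta(2s)}\leq 2\zeta(s)^2/q^s$ is the paper's $k=\ell=0$ case.

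The obstacle you flag is real and is, in fact, the weak point of the paper's own proof. After the polynomial estimate with error $\frac{P(m^s)Q(1)+Q(m^s)P(1)}{q^s}\frac{\zeta(s)^2}{\zeta(2s)}$, the paper simply asserts that approximating bounded continuous $f,g$ by polynomials allows one to replace $P(m^s)Q(1)+Q(m^s)P(1)$ by $\norm{f}_\infty\norm{g}_\infty$; but uniform approximation of $h$ on $\Uu$ does not control $P_n(m^s)$ at the exterior point $m^s>1$ (take $P_n(z)=z^n$: $\norm{P_n}_{\infty,\Uu}=1$ while $P_n(m^s)=m^{ns}\to\infty$). Moreover the $m^{sk}$ factor is not an artefact to be removed by cancellation: for $h(z)=z^N$ and $q\equiv -1\ (\Mod m^N)$, the single off-diagonal term $n_1=(q+1)/m^N$, $n_2=1$ contributes $m^{Ns}/(q+1)^s$ to the sign-definite remainder, so the moment error is genuinely of order $m^{Ns}/q^s$ while $\norm{h}_\infty=1$. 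For $N$ large this exceeds the lemma's claimed constant $\zeta(s)^2/\zeta(2s)$, so the bound cannot hold with a constant depending only on $s$ and $\norm{h}_\infty$; any correct formulation has to retain a dependence on $\sum_k\abs{\widehat h(k)}\,m^{s\abs{k}}$ (equivalently, on the degree or analytic radius of $h$), which is exactly what both your termwise estimate and the paper's intermediate polynomial bound actually deliver. Your honesty in leaving the numerator bound open, and your suspicion of the Laplace-integral repair, were both warranted: the boundary term you pinpoint persists there as well.
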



\begin{proof}
We have 
\begin{align*}
\bracket{  D_m^k f_s ,  D_m^\ell  f_s   }_{ L^2(\mu_q) } & = \sum_{n_1, n_2 \geq 1 } \frac{1}{(n_1 n_2)^s } \Unens{ n_1 m^k \equiv n_2 m^\ell (\Mod q) }  \\
              & = \sum_{n_1, n_2 \geq 1 } \frac{1}{(n_1 n_2)^s } \sum_{q \in \Zz} \Unens{ n_1 m^k = n_2 m^\ell  + r q} \\
              & = \sum_{n_1, n_2 \geq 1 } \frac{1}{(n_1 n_2)^s } \Unens{ n_1 m^k = n_2 m^\ell   }  \\
              &  \hspace{+2cm} + m^{ s(k + \ell ) } \sum_{q \in \Zz^*} \sum_{n_1, n_2 \geq 1 } \frac{1}{(n_1 m^k n_2 m^\ell)^s } \Unens{ n_1 m^k = n_2 m^\ell  + r q} \\
              & = \frac{ \zeta(2s) }{ m^{s \abs{k - \ell} } } +  m^{ s(k + \ell ) } \sum_{ r, n \geq 1  } \prth{ \frac{1}{(n m^k (n m^k + rq) )^s } + \frac{1}{(n m^\ell (n m^\ell + rq) )^s }  }
\end{align*}

Thus, 
\begin{align*}
0 \leq \bracket{  D_m^k f_s ,  D_m^\ell  f_s   }_{ L^2(\mu_q) } - \frac{ \zeta(2s) }{ m^{s \abs{k - \ell} } } & \leq   m^{ s(k + \ell ) } \sum_{ r, n \geq 1  } \prth{ \frac{1}{(n m^k rq  )^s } + \frac{1}{(n m^\ell  rq  )^s }  } \\
               & = \frac{ m^{sk} + m^{s\ell } }{ q^s } \zeta(s)^2 
\end{align*}

And the case $ k = \ell = 0 $ gives
\begin{align*}
0 \leq \bracket{  f_s , f_s   }_{ L^2(\mu_q) } - \zeta(2s)   & = 2 \sum_{ r, n \geq 1  }  \frac{1}{(n (n + rq ) )^s }   \leq \frac{ 2 }{ q^s } \zeta(s)^2
\end{align*}

This implies 
\begin{align*}
\Espr{s, q}{ \chib_m^k \overline{\chib_m}^\ell }  & = \frac{\bracket{  D_m^k f_s ,  D_m^\ell  f_s   }_{ L^2(\mu_q) }}{\bracket{ f_s , f_s   }_{ L^2(\mu_q) } }  \\
                & = \frac{   \frac{ \zeta(2s) }{ m^{s \abs{k - \ell} } } + O\prth{ \frac{ m^{sk} + m^{s\ell } }{ q^s } \zeta(s)^2  } }{  \zeta(2s) + O\prth{ \frac{ 1 }{ q^s } \zeta(s)^2 }   } \\
                & = \Esp{ e^{i (k - \ell) s \log(m) \Ce} } + O\prth{  \frac{ m^{sk} + m^{s\ell } }{ q^s } \frac{\zeta(s)^2 }{ \zeta(2s) } }
\end{align*}

Taking linear combinations on $ (k, \ell) $, we deduce that for all real polynomial $  P, Q $, we get
\begin{align*}
\Espr{s, q}{ P(\chib_m ) Q(\overline{\chib_m}) }  = \Esp{ P\prth{ e^{i  s \log(m) \Ce} } Q\prth{ e^{-i  s \log(m) \Ce} }  } + O\prth{  \frac{ P(m^s) Q(1) + Q(m^s)P(1) }{ q^s } \frac{\zeta(s)^2 }{ \zeta(2s) } }
\end{align*}

The implicit constant in the $ O $ does not depend on any parameter. We can thus take a sequence of polynomials that approximate bounded continuous functions $ f, g $ and get 
\begin{align*}
\Espr{s, q}{ f(\chib_m ) g(\overline{\chib_m}) }  = \Esp{ f\prth{ e^{i  s \log(m) \Ce} } g\prth{ e^{-i  s \log(m) \Ce} }  } + O\prth{  \frac{ \norm{f}_\infty \norm{g}_\infty }{ q^s } \frac{\zeta(s)^2 }{ \zeta(2s) } }
\end{align*}

We thus get the result.
\end{proof}

\begin{remark}
We also have the integral representation for $ k, \ell \geq 0 $
\begin{align*}
\bracket{  D_m^k f_s ,  D_m^\ell  f_s   }_{ L^2(\mu_q) } & = \frac{ \zeta(2s) }{ m^{s \abs{k - \ell} } } +  \sum_{ r, n \geq 1  } \prth{ \frac{ m^{ s \ell } }{ n^s   (n m^k + rq )^s } + \frac{ m^{ s k } }{ n^s   (n m^\ell + rq )^s }  } \\
              & = \frac{ \zeta(2s) }{ m^{s \abs{k - \ell} } } +   \sum_{ r, n \geq 1  } \int_{ \Rr_+ }\prth{ \frac{ m^{ s \ell } }{ n^s}  e^{-t (n m^k + rq ) } + \frac{ m^{ s k } }{ n^s}  e^{-t (n m^\ell + rq ) } } t^{s - 1} \frac{dt}{\Gamma(s) } \\
              & = \frac{ \zeta(2s) }{ m^{s \abs{k - \ell} } } +   \frac{1}{q^s}  \int_{ \Rr_+ } \frac{ m^{ s \ell }   \Li_s(e^{-t m^k/q } ) +  m^{ s k } \Li_s( e^{-t m^\ell / q } ) }{1 - e^{- t} }  \, e^{- t }   t^{s - 1} \frac{dt}{\Gamma(s) }
\end{align*}

Using $ \overline{\chib_m} = \chib_m\inv $, this implies, for all $ k \in \Zz $ 
\begin{align*}
\Espr{s, q}{ \chib_m^k } & = \frac{\zeta(2s) }{ \zeta_q(2s) } \Esp{ e^{i k s\log(m)\, \Ce } } \\
            & \qquad +\frac{1}{q^s \zeta_q(s) } \int_{ \Rr_+ } \frac{ m^{ - s \abs{k}  }   \Li_s(e^{-t m^{ \abs{k}  }/q } ) +  m^{ s \abs{k}  } \Li_s( e^{-t m^{-\abs{k} } / q } ) }{1 - e^{- t} }  \, e^{- t }   t^{s - 1} \frac{dt}{\Gamma(s) }  
\end{align*}
where $ \zeta_q(s) :=  \bracket{  f_s , f_s   }_{ L^2(\mu_q) } = \zeta(2s) + 2 \sum_{ r, n \geq 1  }  \frac{1}{(n (n + rq ) )^s }$.

This last representation, nevertheless, does not allow to extend analytically $ k \mapsto \Espr{s, q}{ \chib_m^k } $ as the function $ k  \mapsto \abs{k} $ is not analytic in $ 0 $. 
\end{remark}

\begin{question}\label{Question:RangeOfs}
The results on the speed of convergence of the windings of the complex Brownian motion as exposed in \cite{PapYor, BentkusPapYor} show a speed of convergence in $ O(t^{-1/2}) $ in the total variation distance. If we make the analogy between $t$ and $q$, we should thus consider the case $ s = 1/2 $. It is possible to give a meaning to the values of $ L $-functions in such a number, by means of a functional equation (see e.g. \cite{Tenenbaum}). This raises the question of the behaviour of random characters for $s$ in this range of values. 
\end{question}

\subsection{The case $ s = \infty $ : the uniform measure}\label{SubSec:OtherMeasure} 

\subsubsection{Evaluations}

Using the classical result
\begin{align*}
\abs{ L_s(\chi) }^2 = \prod_{p \in \Pe} \abs{ 1 + \frac{\chi(p) }{p^s }  }^{-2} \tendvers{s}{+\infty } 1
\end{align*}
we can conclude that 
\begin{align*}
\Proba{\!s, q}{\chi} = \frac{ \abs{L_s(\chi) }^2 }{ \sum_{ \eta \in \widehat{G}_q } \abs{L_s(\eta) }^2  } \tendvers{s}{+\infty } \frac{1}{ \varphi(q) } = \Proba{\infty, q}{\chi}
\end{align*}

This is the uniform measure on $ \widehat{G}_q $ that assigns equal mass to any character. The behaviour of the evaluations $ \chib_k $ under this measure is given in theorem \ref{Theorem:LimitEvaluationsUniformMeasure} that we recall

\vspace{+0.25cm}

\noindent\textbf{Theorem \ref{Theorem:LimitEvaluationsUniformMeasure}} \textit{The following convergence in law is satisfied for all fixed integer $ k \in \intcrochet{2, q} $} 
\begin{align*}
\chib_k \cvlaw{q}{ + \infty } e^{i 2\pi  U}, \qquad U \sim \Us\!\prth{\, \crochet{0, 1}}
\end{align*}

\textit{Moreover, $ (\chib_p)_{p \in \Pe} $ converges in law to a vector of independent random variables.}
%


\begin{proof}

An application of the method of moments and the second orthogonality relation gives 
\begin{align*}
\Esp{  \chib_k^m \overline{ \chib}_k^\ell  } & =  \frac{1}{\varphi(q) }  \sum_{ \chi \in \widehat{G}_q }  \chi(k)^m \overline{ \chi(k)}^\ell \\
             & = \frac{1}{\varphi(q) }  \sum_{ \chi \in \widehat{G}_q }  \chi(k^m) \overline{ \chi(k^\ell) } \\
             & = \Unens{ k^m \equiv k^\ell \,\prth{ \Mod q } } \\
             & \tendvers{q}{+\infty } \Unens{ k^m = k^\ell } = \Unens{  m =  \ell } = \int_0^1 e^{2 i \pi (m - \ell) \theta } d\theta = \Esp{ e^{2 i \pi (m - \ell) U } }
\end{align*}

For the independence, set $ n := \prod_{p \in \Pe} p^{m_p} $ and $ n' := \prod_{p \in \Pe} p^{\ell_p} $ for $ (m_p)_p $ and $ (\ell_p)_p $ two sequences of integers. Then,
\begin{align*}
\Esp{ \prod_{p \in \Pe } \chib_p^{m_p} \overline{ \chib}_p^{\ell_p} \!\! } =  \frac{1}{\varphi(q) } \! \sum_{ \chi \in \widehat{G}_q } \! \chi(n)  \overline{ \chi(n')} & = \Unens{ n \equiv n' \,\prth{ \Mod q } } \\
             & \tendvers{q}{+\infty } \Unens{ n = n' } \!= \!\prod_{p \in \Pe } \! \Unens{  m_p =  \ell_p } \!=\! \prod_{p \in \Pe } \!  \Esp{ e^{2 i \pi (m_p - \ell_p) U } }
\end{align*}
which concludes the proof.
\end{proof}

We remark that this last convergence does not depend on $k$, as long as $k$ does not vary with $q$. This thus asks the

\begin{question}\label{Question:PhaseTransition}
Describe the phase transition between $ s < \infty $ and $ s = \infty $, i.e. can we find $ s \equiv s(\lambda, q) \to +\infty $ when $ q \to +\infty $ with a certain parameter $ \lambda \in \Rr $ such that $ \chib_k $ converges in law to a non-trivial random variable $ e^{i \Theta(\lambda, k) } $ ?

The law of such a random variable $ e^{i\Theta(\lambda, k)} $ would interpolate between  $ e^{i s \log(k) \Ce } $ and $ e^{ 2 i \pi U } $. 
\end{question}

\subsubsection{The Bohr-Jessen distribution}

An interested question related with the uniform measure $  \Pp_{\! \infty, q} $ is to compute the limiting distribution of $ L_t(\chib) $, as a classical distribution will appear at the limit :

\begin{theorem}[Convergence in law of $ L_t(\chib) $ under $ \Pp_{\! \infty, q} $] Define the Bohr-Jessen law for $ \alpha > 1 $ by
\begin{align}\label{Def:BohrJessenLaw}
\BJ_\alpha \eqlaw \prod_{p \in \Pe } \prth{ 1 - \frac{e^{2 i \pi U_p } }{p^\alpha } }\inv 
\end{align}
where $ (U_p)_{p \in \Pe} $ is a sequence of i.i.d. uniform random variables on $ \crochet{0, 1} $. Then, one has under $ \Pp_{\! \infty, q} $
\begin{align*}
L_t(\chib ) \cvlaw{q}{+\infty } \BJ_t
\end{align*}
\end{theorem}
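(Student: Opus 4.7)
The plan is to apply the method of moments, leveraging the fact that both $L_t(\chib)$ and $\BJ_t$ take values in a fixed compact subset of $\Cc$. Indeed, under any probability measure on $\widehat{G}_q$ one has the deterministic bound $\abs{L_t(\chib)} \leq \sum_{n \geq 1} \abs{\chib(n)}/n^{\Re(t)} \leq \zeta(\Re(t))$, and by the Euler product of \eqref{Def:BohrJessenLaw} one also has $\abs{\BJ_t} \leq \prod_{p \in \Pe} (1 - p^{-\Re(t)})\inv = \zeta(\Re(t))$ almost surely. Since both random variables are supported in the compact disk $\ensemble{ \abs{z} \leq \zeta(\Re(t)) }$, Stone--Weierstrass reduces convergence in law to the convergence of all mixed moments $\Espr{\infty, q}{ L_t(\chib)^m \, \overline{L_t(\chib)}^{m'} } \to \Esp{ \BJ_t^m \, \overline{\BJ_t}^{m'} }$ for $m, m' \geq 0$.

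To handle the $L$-measure side I would expand, using complete multiplicativity,
\[
L_t(\chib)^m \, \overline{L_t(\chib)}^{m'} = \sum_{\mathbf{n}, \mathbf{n}'} \frac{ \chib(N) \, \overline{\chib(N')} }{ N^t \, (N')^{\overline{t}} }, \qquad N := n_1 \cdots n_m, \quad N' := n'_1 \cdots n'_{m'},
\]
and take expectations termwise. The orthogonality relation \eqref{Eq:ExtendedSecondOrthogonalityRelation}, read together with the built-in vanishing $\chib(n) = 0$ for $\gcd(n, q) > 1$, gives $\Espr{\infty, q}{ \chib(N) \, \overline{\chib(N')} } = \Unens{ N \equiv N' \, (\Mod q) } \Unens{ \gcd(N N', q) = 1 }$. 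The summand is then dominated in absolute value by $(N N')^{-\Re(t)}$, whose total $\ell^1$ norm equals $\zeta(\Re(t))^{m + m'} < \infty$; combined with the fact that for each fixed $(\mathbf{n}, \mathbf{n}')$ both indicators collapse to $\Unens{ N = N' }$ in the limit $q \to \infty$ (exactly as in the proof of theorem \ref{Theorem:LimitEvaluationsUniformMeasure}), dominated convergence yields
\[
\lim_{q \to \infty} \Espr{\infty, q}{ L_t(\chib)^m \, \overline{L_t(\chib)}^{m'} } = \sum_{\mathbf{n}, \mathbf{n}'} \frac{ \Unens{ N = N' } }{ N^t \, (N')^{\overline{t}} }.
\]

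For the Bohr--Jessen side I would introduce the random completely multiplicative function $\widetilde{\chi}(n) := \prod_{p \in \Pe} e^{2 i \pi v_p(n) U_p}$, so that by \eqref{Eq:EulerProd=SumGeneral} the infinite product defining $\BJ_t$ rewrites as the absolutely convergent Dirichlet series $\BJ_t = \sum_{n \geq 1} \widetilde{\chi}(n)/n^t$. Independence of the $(U_p)_{p \in \Pe}$ together with $\Esp{ e^{2 i \pi k U} } = \Unens{ k = 0 }$ for $k \in \Zz$ gives $\Esp{ \widetilde{\chi}(N) \, \overline{\widetilde{\chi}(N')} } = \prod_{p \in \Pe} \Unens{ v_p(N) = v_p(N') } = \Unens{ N = N' }$, so expanding the $m$-th power of this series term-by-term reproduces exactly the sum obtained as the $q \to \infty$ limit of the $L$-side.

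The main obstacle, if one is to be named, is justifying the interchange of the limit $q \to \infty$ with the double infinite summation: the coprimality constraint $\gcd(N N', q) = 1$ excludes infinitely many terms for each fixed $q$, but the $q$-independent absolute bound $(N N')^{-\Re(t)}$ with summable $\ell^1$ norm $\zeta(\Re(t))^{m + m'}$ makes dominated convergence applicable once one accepts the pointwise collapse of the two indicators already used in the evaluation-level theorem. Everything else reduces to a direct matching of moment sums, and the compactness of the supports then promotes moment convergence to the announced convergence in law.
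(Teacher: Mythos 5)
Your proof is correct and takes a genuinely different route from the paper's, in two respects. First, you compute all mixed moments $\Espr{\infty, q}{ L_t(\chib)^m \, \overline{L_t(\chib)}^{m'} }$ for arbitrary $m, m' \geq 0$, while the paper only treats the absolute moments $\Espr{\infty, q}{ \abs{ L_t(\chib) }^{2k} }$; for a bounded complex random variable, convergence of the $\Esp{\abs{X}^{2k}}$ alone establishes only convergence in law of $\abs{X}^2$, so your version of the method of moments is the one that actually closes the complex-valued case. Second, to identify the $q\to\infty$ limit with the moments of $\BJ_t$, the paper writes $\Un^{*k}(n)$ as a product of raising factorials over the valuations $v_p(n)$, factors the diagonal sum into an Euler product, and applies Parseval's formula; your reinterpretation of $\BJ_t$ as the random Dirichlet series $\sum_{n\ge 1}\widetilde{\chi}(n)/n^t$ attached to a random completely multiplicative function collapses that arithmetic computation to the one-line independence identity $\Esp{\widetilde{\chi}(N)\,\overline{\widetilde{\chi}(N')}}=\Unens{N=N'}$, and it carries over verbatim to the mixed moments. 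The one caveat both arguments share is the passage $\Unens{\gcd(NN',q)=1}\to 1$ for fixed $N, N'$: this fails along subsequences of $q$ having a bounded small prime factor (for instance $q=2^\nu$), so the limit should be read along a suitable sequence of moduli, exactly as in theorem \ref{Theorem:LimitEvaluationsUniformMeasure}; this is not a defect of your proposal relative to the paper's own proof.
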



\begin{proof}
We have for all $ t > 1 $
\begin{align*}
\Espr{\infty, q}{ \abs{ L_t(\chib) }^{2k} } & = \sum_{ \substack{  m_1, \dots , m_k \geq 1 \\ n_1, \dots , n_k \geq 1 } } \prod_{i = 1}^k  \frac{1}{ (n_i m_i)^t }  \Unens{ \prod_{i = 1}^k n_i \equiv  \prod_{i = 1}^k m_i \, ( \Mod q ) } \\
               & \tendvers{q}{+\infty } \sum_{ \substack{  m_1, \dots , m_k \geq 1 \\ n_1, \dots , n_k \geq 1 } } \prod_{i = 1}^k  \frac{1}{ (n_i m_i)^t }  \Unens{ \prod_{i = 1}^k n_i =  \prod_{i = 1}^k m_i } = \sum_{n \geq 1} \prth{ \frac{ \Un^{*k}(n) }{n^t } }^2 
\end{align*}
hence the convergence in law. To identify the limit, we use 
\begin{align*}
\Un^{*k}(n) = \prod_{p \in \Pe } f_k( v_p(n) ), \qquad f_k(m) := \frac{k^{ \uparrow m } }{ m! }
\end{align*}
which is a consequence of the identities, valid for all $ z \in \ensemble{\Re > 1} $
\begin{align*}
\sum_{n \geq 1} \frac{\Un^{*k}(n)}{ n^z } \stackrel{\mbox{\eqref{Eq:ConvolutionMorphism}} }{=} \! \prth{\sum_{n \geq 1} \frac{1 }{ n^z } }^{\!\!\!  k} \! \stackrel{\mbox{\eqref{Eq:EulerProd=SumGeneral}} }{=} \prod_{p \in \Pe } \prth{ 1 -  p^{ -z } }^{-k } = \prod_{p \in \Pe } \sum_{ m_p \geq 0 } \frac{k^{ \uparrow m_p } }{ m_p! }  p^{ -m_p z  } \stackrel{\mbox{\eqref{Eq:ConvolutionMorphism}} }{=} \sum_{n \geq 1} \frac{1}{n^z } \prod_{p \in \Pe } \frac{k^{ \uparrow v_p(n) } }{ v_p(n)! } 
\end{align*}
\vspace{-0.2cm}
\begin{align*}
\mbox{We can thus write }   \sum_{n \geq 1} \prth{ \frac{ \Un^{*k}(n) }{n^t } }^2   & =  \sum_{n \geq 1} \frac{ 1 }{n^{2t} } \prth{ \prod_{p \in \Pe } f_k( v_p(n) ) }^2 \\
               & = \prod_{p \in \Pe } \sum_{ m_p \geq 0 }  f_k( m_p )^2  p^{ -2 m_p t  } \quad \mbox{ using \eqref{Eq:ConvolutionMorphism}}  \\
               & = \prod_{p \in \Pe } \int_0^1 \abs{ \sum_{ m \geq 0 }  f_k( m )   p^{ -  m t  } e^{2 i \pi m u }  }^2 \!\! du  \mbox{ (Parseval formula)}\\
               & = \prod_{p \in \Pe } \Esp{ \abs{ 1 - p^{-t } e^{ 2 i \pi U } }^{-2k} } = \Esp{ \abs{\BJ_t}^{2k} }
\end{align*}
which concludes the proof.
\end{proof}

\begin{remark}
The Bohr-Jessen distribution \eqref{Def:BohrJessenLaw} appeared in \cite{BohrJessen} when considering the limiting distribution of the random variable $ \zeta( \alpha + i T U) = L_{\alpha + i TU}(1) $ for $ \alpha > 1/2 $ and $ T \to +\infty $. When $ \alpha > 1 $, this is the method of moments that is used. The behaviour inside the strip $  1/2 < \alpha < 1 $ is studied by means of an approximate functional equation (see \cite[II.]{BohrJessen}). 

\end{remark}

\begin{remark}
To compute the speed of convergence of the moments, one writes
\begin{align*}
\Espr{\infty, q}{ \abs{ L_t(\chib) }^{2k} } & = \sum_{ m, n \geq 1}  \frac{ \Un^{*k}(n) \Un^{*k}(m) }{(mn)^t }  \Unens{ m \equiv  n \, ( \Mod q ) }	\\
             & = \Esp{ \abs{\BJ_t}^{2k} } + 2 \sum_{  n, \ell \geq 1}  \frac{ \Un^{*k}(n + q \ell) \Un^{*k}(n) }{(n + q \ell)^t n^t }  
\end{align*}

To pursue, one needs an equivalent of $ \Un^{*k}(n) $ when $ n \to +\infty $. For $k = 2$, we have $\Un^{*2}(n) = d(n) := \sum_{d \divise n} 1 $, the number of divisors of $n$. It is known that $ d(n) = o_\varepsilon(n^\varepsilon) $ for all $ \varepsilon > 0 $, hence the speed of convergence 
\begin{align*}
\Espr{\infty, q}{ \abs{ L_t(\chib) }^2 } = \Esp{ \abs{\BJ_t}^2 } + O_{\varepsilon, t}\prth{ \frac{1}{ q^{t - \varepsilon} } }  
\end{align*}
\end{remark}

\section{Another type of $L$-measure}\label{Section:LMesureGenerale}

As seen in the introduction, the general analogue of the Frobenius characteristic 
\begin{align*}
\Ch_X(\chi ) = \frac{1}{n!} \sum_{\sigma \in \Sg_n } \chi (\sigma) \, \prod_{k \geq 1} p_\ell(X)^{ m_\ell(\sigma) }  
\end{align*}
is
\begin{align*}
L_\ab(\chi ) := \sum_{ n \in \Nn^* } \chi (n) \, \prod_{p \in \Pe } a_p^{ v_p(n) }  
\end{align*}
where $ (a_p)_{p \in \Pe} $ is a sequence of real numbers satisfying $ 0 < a_p < 1 $ and $ \prod_{p \in \Pe} \abs{1 - a_p}\inv < \infty $. We thus define the the measure 
\begin{align}\label{Def:GeneralLmeasure}
\Proba{\ab ; q }{\chi} := \frac{ \abs{ L_\ab(\chi ) }^2 }{ Z(\ab) }, \qquad Z(\ab) := \sum_{ \chi \in \widehat{G}_q } \abs{ L_\ab(\chi ) }^2
\end{align}

These last functions may seem more general, but they do not enjoy, in their full generality, the same properties as the $L$-function, for instance a functional equation that allows to extend them analytically ; hence, in a certain way, the $ L $-measures defined in \ref{Def:DirichletMeasure} are more general. For a certain choice of parameters $ (a_p)_p $, though, one can get $L$-functions of \textit{twisted characters} (see e.g. \cite{SerreArithmetic, Tenenbaum}).

A more general definition with for instance $ L_{\bb}(\chi) := \sum_n \chi(n) b_n $ with $ \sum_n \abs{b_n} < \infty $ is also possible, but the limiting evaluations will not be independent ; in the case of \eqref{Def:GeneralLmeasure}, we indeed have the

\begin{theorem} Let $ \ell \geq 1 $ and $ (p_j)_{1 \leq j \leq \ell} $ be fixed prime numbers. Then, under $ \Pp_{\ab ; q} $, we have the following convergence in distribution
\begin{align*}
( \chib_{ p_1}, \dots, \chib_{ p_\ell } ) \cvlaw{q}{ +\infty } \prth{ e^{i    \log(a_{p_1}) \, \Ce_1} , \dots, e^{i    \log(a_{p_\ell}) \, \Ce_\ell } }
\end{align*}
where $ (\Ce_1, \dots, \Ce_\ell) $ are independent Cauchy-distributed random variables. 
\end{theorem}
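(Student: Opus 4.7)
The plan is to mimic the proof of Theorem \ref{Theorem:JointEvaluationFluctuations} with $p^{-s}$ replaced by $a_p$, using the method of moments. Since each $\chib_{p_j}$ takes values on the unit circle (when nonzero), the joint distribution on $\Uu^\ell$ is determined by the mixed moments $\Espr{\ab;q}{\prod_{j=1}^\ell \chib_{p_j}^{k_j}\,\overline{\chib_{p_j}}^{r_j}}$ for integers $k_j, r_j \geq 0$. Set $\kappa := \prod_j p_j^{k_j}$ and $\rho := \prod_j p_j^{r_j}$, so that by complete multiplicativity this joint moment equals $\Espr{\ab;q}{\chib_\kappa\,\overline{\chib_\rho}}$.

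First I would expand $\abs{L_\ab(\chi)}^2 = \sum_{m,n\geq 1} a_\ab(n)\,a_\ab(m)\,\chi(n)\overline{\chi(m)}$ where $a_\ab(n) := \prod_{p\in\Pe} a_p^{v_p(n)}$ is completely multiplicative. Inserting this into the definition \eqref{Def:GeneralLmeasure} and using the first orthogonality relation \eqref{Eq:FirstOrthogonalityRelation} for the sum over $\chi \in \widehat{G}_q$ yields
\begin{align*}
\Espr{\ab;q}{\chib_\kappa\,\overline{\chib_\rho}} = \frac{\varphi(q)}{Z(\ab)} \sum_{m,n\geq 1} a_\ab(n)\,a_\ab(m)\,\Unens{n\kappa \equiv m\rho\,(\Mod q)}.
\end{align*}
The absolute convergence of the series (which follows from $\prod_p(1-a_p)^{-1} < \infty$, hence $\sum_n a_\ab(n) < \infty$) lets one apply dominated convergence as $q \to +\infty$: the congruence $n\kappa \equiv m\rho\,(\Mod q)$ degenerates to the equality $n\kappa = m\rho$, and $Z(\ab)/\varphi(q) \to \sum_{n \geq 1} a_\ab(n)^2 = \prod_p(1-a_p^2)^{-1}$ by the same argument applied to $k=\ell=0$.

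Next I would simplify the equation $n\kappa = m\rho$. Setting $N_1 := \prod_j p_j^{(k_j-r_j)_+}$ and $N_2 := \prod_j p_j^{(r_j-k_j)_+}$, we have $\gcd(N_1,N_2)=1$ and $\{n\kappa = m\rho\} = \{nN_1 = mN_2\}$. Coprimality forces $N_2 \mid n$ and $N_1 \mid m$; writing $n = N_2 n'$, $m = N_1 m'$ gives $n' = m'$. Using complete multiplicativity of $a_\ab$,
\begin{align*}
\sum_{m,n\geq 1} a_\ab(n) a_\ab(m) \Unens{n\kappa = m\rho} = a_\ab(N_1 N_2) \sum_{n'\geq 1} a_\ab(n')^2 = \prth{ \prod_{j=1}^\ell a_{p_j}^{|k_j - r_j|} } \cdot \prod_{p \in \Pe} (1-a_p^2)^{-1},
\end{align*}
since $(k_j-r_j)_+ + (r_j-k_j)_+ = |k_j - r_j|$. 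Dividing by the normalising factor, the limiting mixed moment is $\prod_j a_{p_j}^{|k_j - r_j|}$.

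Finally I would identify this with the characteristic function of the candidate limit. For a standard Cauchy $\Ce$ and any $\alpha \in \Rr$, $\Esp{e^{im\alpha \Ce}} = e^{-|m\alpha|}$. With $\alpha_j := \log(a_{p_j}) < 0$, for independent $(\Ce_j)$,
\begin{align*}
\Esp{\prod_{j=1}^\ell e^{i(k_j - r_j)\log(a_{p_j})\Ce_j}} = \prod_{j=1}^\ell e^{-|k_j - r_j||\log(a_{p_j})|} = \prod_{j=1}^\ell a_{p_j}^{|k_j-r_j|},
\end{align*}
which matches. Since the limiting distribution is supported on the compact torus $\Uu^\ell$, its moments (in all $\chib, \overline{\chib}$) determine it, giving the claimed convergence in law. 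The main subtlety is justifying the interchange of limit and summation in the congruence $\to$ equality step; this is where the summability $\sum_n a_\ab(n) < \infty$ (equivalently, the hypothesis $\prod_p(1-a_p)^{-1} < \infty$) is essential, and everything else is bookkeeping with completely multiplicative functions.
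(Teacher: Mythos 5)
Your proposal is correct and follows essentially the same path as the paper: expand $|L_{\ab}(\chi)|^2$, apply the first orthogonality relation, pass to the limit $q\to\infty$ turning the congruence into an equality, reduce $\{\kappa n = \rho m\}$ to $\{N_1 n = N_2 m\}$ with $\gcd(N_1,N_2)=1$, and use complete multiplicativity to extract $\prod_j a_{p_j}^{|k_j-r_j|}$. Your direct substitution $n=N_2 n'$, $m=N_1 m'$ with $n'=m'$ is a slightly cleaner way to evaluate the constrained double sum than the paper's multiply-and-divide manipulation, and you are more explicit about the dominated-convergence and moment-determinacy steps, but these are cosmetic rather than substantive differences.
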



\begin{proof}
Set $ \kappa := \prod_{j = 1}^\ell p_j^{k_j} $ and $ \rho := \prod_{j = 1}^\ell p_j^{r_j} $ for $ (k_j)_j $ and $ (r_j)_j $ integers. Then, using the second orthogonality relation, the multiplicativity of characters and the property $ v_p(n_1 n_2) = v_p(n_1) + v_p(n_2) $ we get

\begin{align*}
\Espr{\ab ; s}{ \prod_{j = 1}^\ell \chib_{p_j}^{k_j}   \overline{ \chib_{p_j}^{r_j} } } & = \Espr{\ab ; s}{   \chib_\kappa \overline{ \chib_\rho } } \tendvers{q}{+\infty } \frac{1}{ Z(\ab) } \sum_{n_1, n_2 \geq 1} \prod_{p \in \Pe} a_p^{v_p(n_1 n_2) } \Unens{\kappa n_1 = \rho n_2 }
\end{align*}

Define $ N_1 := \prod_{j = 1}^\ell p_j^{ (k_j - r_j)_+ } $ and $  N_2 := \prod_{j = 1}^\ell p_j^{ (r_j - k_j)_+ } $ 
so that $ \ensemble{ \kappa n_1 = \rho n_2 } = \ensemble{ N_1 n_1 = N_2 n_2 } $ and $ \gcd(N_1, N_2) = 1 $. We have 
\begin{align*}
\sum_{n_1, n_2 \geq 1 } \prod_{p \in \Pe} a_p^{v_p(n_1 n_2) } \Unens{ N_1 n_1 = N_2 n_2 } & = \prod_{p \in \Pe} a_p^{- v_p(N_1 N_2) } \sum_{n_1  \geq 1 } \prod_{p \in \Pe} a_p^{2 v_p(N_1 n_1) } \Unens{ N_2 \divise n_1 N_1 } \\
              & = \prod_{p \in \Pe} a_p^{- v_p(N_1 N_2) } \sum_{n_1  \geq 1 } \prod_{p \in \Pe} a_p^{2 v_p(N_1 n_1) } \Unens{ N_2 \divise n_1   } \quad \mbox{as }\gcd(N_1, N_2) = 1 \\
              & = \prod_{p \in \Pe} a_p^{- v_p(N_1 N_2) } \sum_{n \geq 1 } \prod_{p \in \Pe} a_p^{2 v_p(N_1 n N_2) } = Z(\ab) \prod_{p \in \Pe} a_p^{ v_p(N_1 N_2) } \\
              & = Z(\ab) \prod_{j = 1}^\ell a_{p_j}^{   (k_j - r_j)_+ + (r_j - k_j)_+ }  \\
              & = Z(\ab) \prod_{j = 1}^\ell a_{p_j}^{ \abs{ k_j - r_j} }
\end{align*}
hence the result.
\end{proof}

%

\section{Conclusion and perspectives}

The probabilistic fluctuations that we have addressed in this article are amongst the most natural questions when one is concerned with a particular model of random variables. In the case of random Dirichlet characters, though, several open questions deserve to be tackled, for instance the phase transition when $s$ depends on $q$ (question \ref{Question:PhaseTransition}) or the extension of the range of $s$ (question \ref{Question:RangeOfs}), not to mention the extension to other types of characters such as characters of function fields, Hecke characters or higher-dimensional representations. One can also restrict the framework to primitive characters only.

Other probabilistic questions of interest concern the large deviations of $ \theta_\chib(p) $ or its mod-Cauchy convergence (see e.g. \cite{DelbaenAl}) a rescaling of a diverging Fourier transform. In the case of the windings of the complex Brownian motion $ Z_t = R_t e^{i \Theta_t } $, one has, locally uniformly in $ u \in \Rr $ (see \cite{BarhoumiModRandomisation})
\begin{align}\label{Eq:ModCauchyConvergenceWinding}
\frac{ \Esp{ e^{i u \Theta_t } } }{  \Esp{ e^{i u \log(\sqrt{t}) \Ce  } } }  \tendvers{t}{ + \infty }  \Phi_\Theta(u) := 2^{ -\abs{u} / 2} \frac{\Gamma(1 + \abs{u}/2  ) }{\Gamma(1 + \abs{u}) }
\end{align}

This result implies the convergence in law towards the Cauchy distribution, and is related with a second-order type of convergence in distribution (see \cite{BarhoumiModStein}). In the case of $ \theta_\chib(p) $, one needs to ``rescale'' by a diverging factor $ \sigma(q) \in \Nn $ and look for a convergence of the form 
\begin{align*}
\frac{ \Espr{q, s}{ \chib_p^{ k \sigma(q) } } }{ \Esp{ e^{i k \sigma(q) \Ce} } } \tendvers{q}{ + \infty } \Phi_{s, \chib_p}(k)
\end{align*}

A last probabilistic question concerns a Markovian coupling in $q$ of the considered random variables. On the symmetric group $ \Sg_n $, such a coupling exists for the Schur measure and is related with the notion of induction of a representation (or Pieri rule, see e.g. \cite{Kerov}). In the case of $ G_q $, one can consider $ q = p^\nu $ for $ p \in \Pe $ and $ \nu $ that varies in $ \Nn $ to look for such an inductive coupling. 

We will tackle these questions in subsequent publications.



\section*{Acknowledgements}

The author expresses its thanks to the following persons for their useful discussions, corrections and encouragements~: Ga\"etan Borot, Valentin F\'eray, Olivier H\'enard, Jeffrey Kuan, Mark Masdeu, Joseph Najnudel, Simon P\'epin-Lehalleur, Nina Snaith and Nikos Zygouras. 

The author was supported by EPSRC grant EP/L012154/1.

\bibliographystyle{amsplain}


\end{document}